\author{Florin Ambro} 
\address{ 
Institute of Mathematics ``Simion Stoilow'' of the Romanian Academy\\
P.O. BOX 1-764, RO-014700 Bucharest\\ 
Romania.
        }
\email{florin.ambro@imar.ro}
\newcommand{\isoto}{{\overset{\sim}{\rightarrow}}}
\newcommand{\Q}{{\mathbb Q}}
\newcommand{\Z}{{\mathbb Z}}
\newcommand{\N}{{\mathbb N}}
\newcommand{\R}{{\mathbb R}}
\newcommand{\bA}{{\mathbb A}} 
\newcommand{\bC}{{\mathbb C}} 
\newcommand{\fm}{{\mathfrak m}} 
\newcommand{\cB}{{\mathcal B}} 
\newcommand{\emb}{\operatorname{emb}}
\newcommand{\GL}{\operatorname{GL}}
\newcommand{\Hom}{\operatorname{Hom}}
\newcommand{\ind}{\operatorname{index}}
\newcommand{\Int}{\operatorname{int}}
\newcommand{\lct}{\operatorname{lct}}
\newcommand{\mld}{\operatorname{mld}}
\newcommand{\mult}{\operatorname{mult}}
\theoremstyle{plain}
\newtheorem{thm}{Theorem}[section]
\newtheorem{lem}[thm]{Lemma}
\newtheorem{cor}[thm]{Corollary}
\newtheorem{prop}[thm]{Proposition}
\theoremstyle{definition}
\newtheorem{defn}[thm]{Definition}
\newtheorem{exmp}[thm]{Example}
\newtheorem{rem}[thm]{Remark}
\newtheorem{ack}{Acknowledgments}   
\theoremstyle{remark}
\begin{document}

\bibliographystyle{amsalpha+}
\title[Classification]
{Classification of toric surface singularities}
\maketitle

\dedicatory{
\center{Dedicated to Yuri Prokhorov, on the occasion of his sixtieth birthday}

}

\begin{abstract} 
We classify two-dimensional toric log germs in terms of their minimal log discrepancy. 
In particular, Borisov's series of toric surface singularities with minimal log discrepancy 
bounded below can be described explicitely and geometrically.
\end{abstract}



\footnotetext[1]{2020 Mathematics Subject Classification. 
	Primary: 14M25. Secondary: 14B05.}

\footnotetext[2]{Keywords: toric singularities, minimal log discrepancies, hyperplane sections.}


\section{Introduction}


Terminal, canonical, log terminal, log canonical singularities are classes of singularities that inevitably appear while running the Log Minimal Model Program. Such singularities are very special in the larger class of normal singularities, so a first hope would be to classify them {\em explicitely}. In low dimension, this is possible.

Consider the surface case (see~\cite{Mat02} for example). Terminal just means smooth. Canonical singularities are the Du Val singularities, which up to analytic isomorphism have explicit equations of type A-D-E.
Log terminal singularities are analytically isomorphic to quotients $0\in \bC^2/G$, where $G\subset \GL(2,\bC)$ is a finite group acting without reflections. Log canonical surface singularities are classified either by taking the index one cover and descending information from the Gorenstein case, or via the dual graph of exceptional curves on the minimal resolution.

In dimension three (see~\cite{Reid87} for example) less is known, besides 
a partial classification of canonical singularities and the classification of terminal $3$-fold singularities (described as cyclic covers of certain hypersurface singularities). As a consequence of classification, M. Reid~\cite{Reid87} observed that for a terminal $3$-fold singularity $P\in X$, the general anti-canonical member $S\in |-K_X|$ has at most Du Val singularity at $P$. This observation had two crucial consequences.
First, an analogue for the general anti-bicanonical member of a flipping contraction was the final step in establishing the Mori Program in dimension three~\cite{Mori88}. Second, V.V. Shokurov~\cite{Sho92} introduced the theory of complements and used them to establish the existence of $3$-fold log flips.

Since~\cite{Sho92,Sho04}, it is believed that in higher dimension a {\em qualitative} classification of 
singularities may be possible, according to certain properties of the anti-pluricanonical members and the minimal log discrepancy (see~\cite{Am06} for an introduction to minimal log discrepancies and Shokurov's conjectures).

Toric singularities are combinatorially defined, so it is natural to know more on their classification. Toric terminal $\Q$-factorial singularities are in bijection with lattice simplices, with one vertex at the origin, containing
no other lattice points besides vertices. 
In dimension three, these are the cyclic quotient singularities of type $\frac{1}{r}(1,-1,a)$, where $r,a$ are relatively prime positive integers. In dimension four, a complete classification was only recently completed~\cite{IS21}, after initial computer-generated explicit partial lists of series proposed in~\cite{MMS88}. Renouncing the explicit nature of the partial lists, A. Borisov~\cite{Bor99} obtained a qualitative classification of toric singularities with minimal log discrepancy bounded below:
given $d,\epsilon>0$, there exists only finitely many {\em series} of toric 
$\Q$-factorial singularities with $\dim X=d$ and $\mld(X)\ge \epsilon$. The series are defined by finitely many closed subgroups of the real torus $\R^d/\Z^d$.
Our motivation is to understand Borisov's series, and to see if their combinatorial definition has a geometric equivalent, possibly with an analogue in the non-toric case. 
In this note, we completely describe Borisov's series in dimension two. An interesting new feature is that they are defined by invariant hyperplane sections.

Our setup differs slightly from Borisov's. For a germ $P\in X$, Borisov considers the {\em global} minimal log discrepancy condition 
$\mld(X)\ge \epsilon$, while we consider the {\em local} minimal log discrepancy condition $\mld_P(X)\ge t$. That is, Borisov considers all geometric valuations over $X$ whose center contains $P$, while we only consider those with center exactly $P$. Borisov's global series are just glueings of finitely many local series.

Fix a real number $0< t\le 2$. Suppose for simplicity the boundary is zero. Every toric surface singularity $P\in X$ is cyclic quotient, say of type $x=\frac{1}{r}(w_1,w_2)$. The complement $X\setminus T$ of the torus sitting inside $X$ is an invariant boundary $\Sigma=E_1+E_2$ satisfying $K+\Sigma=0$ and $\mld_P(X,\Sigma)=0$. The condition $\mld_P(X)\ge t$ translates as follows: for every $y\in (0,1]^2\cap (\Z^2+\Z x)$, the inequality $y_1+y_2\ge t$ holds. A C-program can list all $x$ with this property, just like in~\cite{MMS88}. There are infinitely many solutions. We take each solution, and investigate why it sits in the list. Our first observation is that besides finitely many exceptions, $x$ sits in the list for a very simple reason: $m_1x_1+m_2x_2\in \Z$ for some integers $m_1,m_2 \in [0,\frac{1}{t}]$, not both zero. We say in this case that $P\in X^2$ belongs to a {\em codimension one $t$-lc series}.
Note that there are only finitely many codimension one $t$-lc series. Geometrically, they can be characterized in two equivalent (dual) ways:
\begin{itemize}
	\item[(H)] The principal divisor $H=m_1E_1+m_2E_2$ is an invariant hyperplane section of $P\in X$ satisfying $\mld_P(X,tH)\ge 0$.
	\item[(C)] Let $n=\max(m_1,m_2)$ and $B_n=\Sigma-\frac{H}{n}=\sum_{i=1}^2(1-\frac{m_i}{n})E_i\ge 0$. Then
	$n(K+B_n)\sim 0$ and $\mld_P(X,B_n)\ge \frac{1}{n}\ge t$. Here $B_n$ is an invariant $n$-complement of $K$.
\end{itemize}

Our second observation is that even the finitely many exceptions can be described explicitely, using two "bounded, linearly independent" invariant hyperplanes (see Theorem~\ref{Lawr} for the precise statement).  Thus toric surface singularities $P\in X$ with $\mld_P(X)\ge t$ belong either to (finitely many) codimension one $t$-lc series, or to finitely many isomorphism types.

Our third observation is that the above classification extends to the case when $X$ is endowed with an invariant boundary $0\le B\le \Sigma$ (see Theorem~\ref{2se} and Section 5.2)). In the log case, the codimension one $t$-lc series are (dually) characterized as follows:
 \begin{itemize}
 	\item[(H)] The principal divisor $H=m_1E_1+m_2E_2$ is an invariant hyperplane section of $P\in X$ satisfying $\mld_P(X,B+tH)\ge 0$.
 	\item[(C)] Let $B=\sum_{i=1}^2(1-a_i)E_i$. Let $n\in \Z_{\ge 1}$ be minimal such that 
 	$na_i\ge m_i$ for $i=1,2$. Denote $B_n=\Sigma-\frac{H}{n}=\sum_{i=1}^2(1-\frac{m_i}{n})E_i\ge B$. 
 	Then $n(K+B_n)\sim 0$, $\mld_P(X,B_n)\ge \frac{1}{n}$, and either a) $n\le t^{-1}$, or 
 	b) $t^{-1}<n\le \lceil t^{-1}\rceil$ and $B_n\ge (1-\frac{1}{nt})\Sigma+\frac{1}{nt}B$. 
 \end{itemize}
The reader may check that the two equivalent conditions imply $\mld_P(X,B)\ge t$. 

Our main result (Theorem~\ref{2se}) is in fact a characterization 
of the condition $\mld_P(X,B)\ge t$ in terms of invariant hyperplanes. 
But once the type of coefficients of $B$ are fixed, Theorem~\ref{2se}
easily gives explicit forms for the series. 

We outline the structure of this note. Section 2 is a preliminary on subgroups avoiding open cones inside a vector space. Section 3 is our key technical tool, a classification of subgroups of $\R^2$ which do not intersect a deformation of the standard simplex (compare~\cite{Law91}).
In Section 4 we apply the classification in Section 3 to translate into
hyperplane sections the condition on a singularity to be $t$-log canonical. This provides a proof for the lists generated by a C-program. In Section 5, we interpret the classification in terms of hyperplane sections and anti-pluricanonical members.

\begin{ack} 
	My interest in Borisov's series was sparked by very stimulating discussions with Yuri Prokhorov, in RIMS about 20 years ago, on what was known at the time as McKernan-Shokurov conjecture on the singularities of Fano fibrations. 
	
	I would like to thank V.V. Shokurov and the anonymous referee for useful comments and remarks.
\end{ack}


\section{Preliminary}


Let $V$ be a finite dimensional $\R$-vector space.
Let $V^*=\Hom_\R(V,\R)$ be the dual vector space. The duality pairing 
$$
V^*\times V\to \R, \ (\varphi,v)\mapsto \langle \varphi,v\rangle=\varphi(v)
$$ induces a canonical identification
$V\isoto (V^*)^*$. For a convex cone $\sigma\subset V$, the dual cone 
is 
$$
\sigma^\vee=\cap_{v\in \sigma}\{\varphi\in V^* \vert \langle \varphi,v\rangle\ge 0\}.
$$ 
The duality theorem for convex cones states that 
the closure of $\sigma$ in $V$ equals $(\sigma^\vee)^\vee$.
Let $G\le (V,+)$ be a subgroup. The dual subgroup inside $V^*$ is defined by 
$$
G^*=\{\varphi\in V^* \vert \langle \varphi,G\rangle\subset \Z\}.
$$
The duality theorem for subgroups states that the closure of $G$ in $V$ equals $(G^*)^*$.

If we choose a basis for $V$, and the dual basis for $V^*$, we may identify $V=\R^d$, $V^*=\check{\R}^d$, with duality pairing 
$\langle x,\check{x}\rangle=\sum_{i=1}^dx_i\check{x}_i$.

For $v\in V$, denote $v^*=\{\varphi\in V^*\vert \langle \varphi,v\rangle\in \Z\}$, that is the subgroup dual to the subgroup $\Z v\subset V$.

\begin{lem}\label{co}
	Let $V$ be a finite dimensional vector space, let $G\le V$ be a closed subgroup, let $\sigma\subset V$ be a closed convex cone such that $\dim \sigma=\dim V$. The following are equivalent:
	\begin{itemize}
		\item[(1)] $G\cap \Int \sigma=\emptyset$. 
		\item[(2)] There exists $0\ne \varphi\in \sigma^\vee$ such that
		$\R \varphi\subset G^*$.
	\end{itemize}
\end{lem}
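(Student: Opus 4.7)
The plan is to treat the two implications separately; the easy direction is (2)$\Rightarrow$(1), while (1)$\Rightarrow$(2) requires upgrading from a group-theoretic avoidance condition to a purely linear one via a rational approximation trick.

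For (2)$\Rightarrow$(1): if $\R\varphi\subset G^*$, then $t\varphi(g)\in\Z$ for every real $t$ and every $g\in G$, which forces $\varphi(g)=0$, so $G\subset\ker\varphi$. A nonzero $\varphi\in\sigma^\vee$ is strictly positive on $\Int\sigma$, since $\sigma$ is full-dimensional: if $\varphi(x)=0$ at some $x\in\Int\sigma$, then $x+\epsilon y\in\Int\sigma$ for every $y\in V$ and all small $\epsilon$ of either sign, forcing $\varphi(y)=0$ for all $y$, a contradiction. Hence $G\cap\Int\sigma\subset\ker\varphi\cap\Int\sigma=\emptyset$.

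For (1)$\Rightarrow$(2), let $W$ denote the $\R$-linear span of $G$ inside $V$. The key claim is that $W\cap\Int\sigma=\emptyset$. Granting it, $W$ is a vector subspace disjoint from the nonempty open convex cone $\Int\sigma$, so hyperplane separation produces a nonzero $\varphi\in V^*$ with $\varphi|_W\equiv 0$ and $\varphi\ge 0$ on $\Int\sigma$, hence on $\sigma=\overline{\Int\sigma}$. Then $\varphi\in\sigma^\vee\setminus\{0\}$ and $\varphi(G)=\{0\}$, which gives $\R\varphi\subset G^*$ as required. Note that the closedness hypothesis on $G$ never enters directly here; indeed both conditions of the lemma depend only on $\overline{G}$, so the point of assuming $G$ closed is merely bookkeeping.

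The main obstacle is the claim $W\cap\Int\sigma=\emptyset$: we must upgrade the avoidance condition from the subgroup $G$ to its full $\R$-span. I would argue by contradiction, manufacturing an element of $G\cap\Int\sigma$ from any putative $p\in W\cap\Int\sigma$. Write $p=\sum_i c_ig_i$ with $c_i\in\R$ and $g_i\in G$. Because $\Int\sigma$ is open, approximating each $c_i$ by a sufficiently close rational $c_i'$ keeps $p'=\sum_i c_i'g_i$ inside $\Int\sigma$. Choose a positive integer $N$ clearing all denominators, so that $Nc_i'\in\Z$ for every $i$. Then $Np'=\sum_i(Nc_i')g_i\in G$, and $Np'\in\Int\sigma$ because $\Int\sigma$ is a cone (closed under multiplication by positive scalars). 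This contradicts (1), establishing the claim and hence the lemma.
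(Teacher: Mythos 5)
Your proof is correct and takes essentially the same route as the paper's: both reduce the problem to the $\R$-linear span of $G$ via an approximation/scaling argument that exploits the openness of $\Int\sigma$ together with the cone property, and then invoke the separation theorem for convex sets to produce $\varphi$. The only difference is organizational (you prove the two implications separately, while the paper runs a chain of equivalences through $G^\perp\cap\sigma^\vee$), and your side remark that closedness of $G$ is not really needed is consistent with the paper's argument.
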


\begin{proof} We may restate (2) as $\varphi\in G^\perp\cap \sigma^\vee$. Therefore (2) is equivalent to $G^\perp\cap\sigma^\vee\ne 0$.
We have $(G^\perp)^\perp=G\otimes_\Z \R=:G_\R$. By the separation theorem for convex sets, $G^\perp\cap\sigma^\vee= 0$ if and only if 
$G_\R+ \sigma=V$. Since $\sigma$ has an interior point, the latter is equivalent to $G_\R\cap \Int\sigma\ne 0$.
We conclude that (2) is equivalent to $G_\R\cap \Int\sigma=0$.
It remains to show that $G\cap \Int\sigma\ne \emptyset$ if and only if 
$G_\R\cap \Int\sigma\ne \emptyset$. 

From the inclusion $G\subset G_\R$,
the direct implication is clear. For the converse, let $g_i\in G$ and $t_i\in \R$ with $\sum_it_ig_i\in \Int \sigma$. There exists $\epsilon>0$
such that $\sum_i(t_i-\epsilon,t_i+\epsilon)g_i\subset \Int\sigma$.
Then $\sum_i(tt_i-t\epsilon,tt_i+t\epsilon)g_i\subset \Int\sigma$ for $t>0$.
For $2t\epsilon> 1$, there exist integers $z_i\in (tt_i-t\epsilon,tt_i+t\epsilon)$. Then
$\sum_iz_ig_i\in \Int\sigma\cap G$.
\end{proof}

\begin{lem}\label{cc} Let $V$ be a finite dimensional vector space, let $G\le V$ be a closed subgroup, let $\sigma\subset V$ be a closed convex cone such that $\dim \sigma=\dim V$. If $\dim V\in \{1,2\}$, the following are equivalent:
\begin{itemize}
\item[(1)] $G\cap \sigma=0$. 
\item[(2)] There exists $\varphi\in \Int\sigma^\vee$ such that $\R \varphi\subset G^*$.
\end{itemize}
\end{lem}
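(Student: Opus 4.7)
The plan is to reduce $(2)$ to a statement about the subspace $W:=\operatorname{span}_{\R}G$, apply Lemma~\ref{co} to produce a candidate $\varphi_0$, and then use the low dimension to handle the case where $\varphi_0$ lands on $\partial\sigma^\vee$ instead of $\Int\sigma^\vee$. Unfolding definitions, $\R\varphi\subset G^*$ is equivalent to $\langle\varphi,g\rangle=0$ for all $g\in G$, i.e.\ $\varphi\in G^\perp=W^\perp$; so $(2)$ asks for $\varphi\in W^\perp\cap \Int\sigma^\vee$. The direction $(2)\Rightarrow(1)$ is immediate: $\sigma$ must be strongly convex for $\Int\sigma^\vee$ to be nonempty, and then any $\varphi\in \Int\sigma^\vee$ is strictly positive on $\sigma\setminus\{0\}$, so $\varphi|_G\equiv 0$ forces $G\cap\sigma=\{0\}$.

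For $(1)\Rightarrow(2)$, I start from $G\cap\sigma=\{0\}$. Since $0\notin\Int\sigma$, this implies $G\cap\Int\sigma=\emptyset$, and Lemma~\ref{co} produces some $\varphi_0\in G^\perp\cap\sigma^\vee$ with $\varphi_0\ne 0$. If $\varphi_0\in \Int\sigma^\vee$ we are done. The obstacle is the case $\varphi_0\in\partial\sigma^\vee$, which is where $\dim V\in\{1,2\}$ enters. In $\dim V=1$, $\partial\sigma^\vee=\{0\}$ and this case cannot occur. In $\dim V=2$, the pointed $2$-dimensional cone $\sigma^\vee$ has boundary $\R_{\ge 0}\psi_1\cup\R_{\ge 0}\psi_2$ equal to its two extreme rays, so $\varphi_0=t\psi_i$ for some $i\in\{1,2\}$ and some $t>0$.

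The main step handles this very special position. Since $G^\perp$ is a subspace containing $\varphi_0=t\psi_i$, it contains $\psi_i$, and therefore $G\subset L:=\psi_i^\perp$, a $1$-dimensional subspace of $V$. By convex duality $L\cap\sigma$ is precisely the extreme ray $F_i$ of $\sigma$ orthogonal to $\psi_i$, in particular a nonzero ray. A closed subgroup of $L\cong\R$ is $\{0\}$, $c\Z$ for some $c>0$, or $L$ itself; in the last two cases a nonzero element of $G$ sits inside $F_i\subset\sigma$, contradicting $(1)$. So $G=0$, whence $W^\perp=V^*$ and any $\varphi\in\Int\sigma^\vee$ witnesses $(2)$. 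The hardest point is this boundary case, and the crucial leverage is that membership $\varphi_0\in G^\perp$ (a subspace) automatically upgrades to $\psi_i\in G^\perp$, pinning $G$ inside the $1$-dimensional line $L$, where condition $(1)$ is prohibitively restrictive.
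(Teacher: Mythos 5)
Your proof is correct and follows essentially the same route as the paper's: both hinge on Lemma~\ref{co} to produce a nonzero $\varphi_0\in G^\perp\cap\sigma^\vee$ and then dispose of the case $\varphi_0\in\partial\sigma^\vee$ by observing that $G$ is then trapped in a line meeting $\sigma$ in a nonzero ray, forcing $G=0$; you merely bypass the paper's intermediate reformulation of (2) as $G_\R\cap\sigma=0$ via the separation theorem. The only caveat is that your description of $\sigma^\vee$ as a pointed cone with exactly two extreme rays presupposes $\sigma$ strongly convex (the hypothesis $\dim\sigma=\dim V$ also allows a half-plane, where $\Int\sigma^\vee=\emptyset$), but the paper's own proof makes the same dichotomy and that degenerate case plays no role in the lemma's application.
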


\begin{proof} We may restate (2) as $\varphi\in G^\perp\cap \Int \sigma^\vee$. By the separation theorem for convex sets,
	$G^\perp\cap \Int \sigma^\vee =\emptyset$ if and only if 
	$G_\R\cap \sigma\ne 0$. Therefore (2) is equivalent to 
	$G_\R\cap \sigma= 0$. It remains to show that 
	$G\cap \sigma=0$ if and only if 
	$G_\R\cap \sigma=0$. 
	
	Suppose $G\cap \sigma=0$. Then $G\cap \Int\sigma=\emptyset$.
	By Lemma~\ref{cc}, there exists $0\ne \varphi\in \sigma^\vee$ 
	such that $G\subset \varphi^\perp$. If $\dim V=1$, we obtain $G=0$.
	Then $G_\R=0$, hence $G_\R\cap \sigma=0$. Suppose $\dim V=2$. We may 
	choose an isomorphism $V\simeq \R^2$ such that $\sigma$ corresponds to 
	$\R^2_{\ge 0}$ or $\R_{\ge 0}\times \R$. In the latter case $G=0$, and we argue as above. Suppose $\sigma=\R^2_{\ge 0}$. If $\varphi\in \Int\sigma^\vee$, then $G_\R\cap \sigma\subset \sigma\cap \varphi^\perp=0$. Else, we may suppose $\varphi=(0,1)$. Then 
	$G_\R\subset \R(1,0)$. Then $G\cap \R_{\ge 0}=0$. Therefore $G=0$.
\end{proof}

\begin{exmp} Lemma~\ref{cc} is no longer true if $\dim V\ge 3$.
	For example, let $c\in \R\setminus \Q$, $V=\R^3$, $G=\Z(0,1,-1)+\Z(1,c,-c)$, $\sigma=\R^3_{\ge 0}$. Then $G\cap \sigma=0$,  $G_\R=\{x\in \R^3\vert x_2+x_3=0\}$, $G_\R\cap \sigma=\R_{\ge 0}(1,0,0)$, $G^\perp=\R(0,1,1)$.
\end{exmp}

\begin{lem}\label{d1}
	Let $G\le \R$ be a subgroup with dual subgroup $G^*\le \check{\R}$,
	let $t>0$. The following properties are equivalent:
	\begin{itemize}
		\item[(1)] $G\cap (0,t)=\emptyset$.
		\item[(2)] $G^*\cap (0,\frac{1}{t}]\ne \emptyset$.
	\end{itemize}
\end{lem}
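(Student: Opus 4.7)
The plan is to reduce to the case that $G$ is closed, and then to invoke the structure theorem for closed subgroups of $\R$: every such subgroup is either $\{0\}$, the whole line, or infinite cyclic of the form $\Z g$ with $g>0$. First I would observe that both conditions are invariant under replacing $G$ by its closure $\bar{G}$. For (1) this is because $(0,t)$ is open, so $G\cap (0,t)\ne \emptyset$ iff $\bar G\cap (0,t)\ne \emptyset$. For (2), this uses that every $\varphi\in\check\R$ is continuous: $\varphi(G)\subset \Z$ forces $\varphi(\bar G)\subset \overline{\varphi(G)}\subset \overline{\Z}=\Z$, so $G^*=(\bar G)^*$; the reverse inclusion $(\bar G)^*\subset G^*$ is automatic.

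Once $G$ is assumed closed, I would split into the three cases. If $G=\{0\}$ then $G^*=\check\R$ and both (1) and (2) hold trivially. If $G=\R$, then $G\cap (0,t)\ne\emptyset$, so (1) fails; and any nonzero $\varphi\in\check\R$ has dense image, so $\varphi(\R)\not\subset\Z$, giving $G^*=\{0\}$, so (2) also fails. The only remaining case is $G=\Z g$ for some $g>0$.

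In this cyclic case, identifying $\check{\R}\simeq \R$ via evaluation at $1$, a direct computation gives $G^*=\{c\in \R:cg\in \Z\}=\frac{1}{g}\Z$. Then condition (1) reads $g\ge t$ (since the smallest positive element of $G$ is $g$), while condition (2) asks whether some $k/g$ with $k\ge 1$ lies in $(0,1/t]$; taking the smallest candidate $k=1$ shows this is equivalent to $1/g\le 1/t$, again $g\ge t$. So (1)$\Leftrightarrow$(2). There is really no obstacle: the whole lemma is a one-dimensional unwinding of the duality theorem. The one subtle point is the asymmetry between the open interval in (1) and the half-open interval in (2); the closed endpoint $1/t$ in (2) is precisely what accommodates the boundary case $g=t$, in which (1) holds because $g\notin (0,t)$ and (2) is witnessed by $1/g=1/t$.
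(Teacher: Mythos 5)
Your proof is correct and follows essentially the same route as the paper: reduce to the case of a closed subgroup and use the classification of closed subgroups of $\R$ (trivial, cyclic, or all of $\R$) together with the explicit computation $(\Z g)^*=\frac{1}{g}\Z$. The only difference is cosmetic: the paper proves $(2)\Rightarrow(1)$ directly in one line (from $0<xy\in\Z$ deduce $xy\ge 1$, hence $x\ge t$) and invokes the structure theorem only for the converse, whereas you fold both directions into a single case analysis, which also requires you to check the case $G=\R$ (where both conditions fail) -- you do this correctly.
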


\begin{proof} 
	Suppose $y\in G^*\cap (0,\frac{1}{t}]$. Let $x\in G$ with $x>0$.
	Then $0<xy\in \Z$. Therefore $xy\ge 1$. Therefore $x\ge t$. We deduce $G\cap (0,t)=\emptyset$.
	
	Conversely, suppose $G\cap (0,t)=\emptyset$. We may suppose $G$ is closed. Then either $G=0$, or $G=\Z g$ for some $g\ge t$. If $G=0$,
	then $G^*=\check{\R}$ contains $(0,\frac{1}{t}]$. If $G=\Z g$, then 
	$G^*=\Z \frac{1}{g}$, and $0<\frac{1}{g}\le \frac{1}{t}$.	
\end{proof}

\begin{lem}\label{d0} Let $G\le \R^2$
be a subgroup such that $G\cap (0,+\infty)^2\ne \emptyset$. Let $t>0$ and $U=(0,t)\times (0,+\infty)$.
The following are equivalent:
\begin{itemize}
\item[(1)] $G\cap U=\emptyset$.
\item[(2)] There exists $(\check{x}_1,0)\in G^*$ with $0<\check{x}_1\le\frac{1}{t}$.
\end{itemize}
\end{lem}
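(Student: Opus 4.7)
The plan is to reduce the lemma to the one-dimensional case of Lemma~\ref{d1}. Let $p_1\colon \R^2\to \R$ denote projection onto the first coordinate. An element $(\check x_1,0)\in \check{\R}^2$ lies in $G^*$ exactly when $\check x_1\cdot p_1(G)\subset \Z$, i.e.\ when $\check x_1\in (p_1(G))^*$. Hence condition (2) is equivalent to $(p_1(G))^*\cap (0,\tfrac{1}{t}]\ne \emptyset$, which by Lemma~\ref{d1} applied to the subgroup $p_1(G)\le \R$ is in turn equivalent to
\[
p_1(G)\cap (0,t)=\emptyset.
\]
It thus suffices to show that (1) is equivalent to this latter condition.

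One direction is trivial: if $p_1(G)\cap (0,t)=\emptyset$ and $g\in G\cap U$, then $g_1=p_1(g)\in (0,t)$, a contradiction. For the converse, I would argue by contradiction: assume (1) and let $g\in G$ with $g_1\in (0,t)$. Let $h=(h_1,h_2)\in G\cap (0,+\infty)^2$ be the element furnished by the hypothesis; applying (1) to $h$ forces $h_1\ge t$, since otherwise $h$ itself would lie in $U$. If $g_2>0$, then already $g\in U$ and we are done. Otherwise $g_2\le 0$, and I set $g':=h-kg\in G$ for a suitable positive integer $k$: the second coordinate of $g'$ is $h_2-kg_2\ge h_2>0$, and the first coordinate $h_1-kg_1$ belongs to $(0,t)$ precisely when $kg_1$ lies in the open interval $(h_1-t,h_1)$.

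The only non-routine point is finding such a $k\ge 1$. Since $0<g_1<t\le h_1$, this interval has length $t>g_1$ and is contained in $(0,h_1)$, so it must contain a positive integer multiple of $g_1$: concretely, the largest integer $k\ge 1$ with $kg_1<h_1$ satisfies $kg_1\ge h_1-g_1>h_1-t$. This produces $g'\in G\cap U$, contradicting (1). The subtlety worth emphasising is the role of the hypothesis $G\cap (0,+\infty)^2\ne \emptyset$: without it, examples such as $G=\Z\times \{0\}$ with $t>1$ satisfy (1) but fail (2), so the hypothesis is indispensable. It is used both to secure $h_2>0$ (making $g'_2>0$ automatic) and, via (1), to force $h_1\ge t$, which is what makes the combinatorial step work.
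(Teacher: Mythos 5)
Your proof is correct, and at the top level it follows the same route as the paper: both arguments reduce to the one-dimensional Lemma~\ref{d1} via the first-coordinate projection, after first showing that under the hypothesis $G\cap(0,+\infty)^2\ne\emptyset$, condition (1) is equivalent to the a priori stronger statement $G\cap\bigl((0,t)\times\R\bigr)=\emptyset$. The difference lies in how that key replacement step is proved. The paper argues by enlarging $t$ to $\inf\{x \mid \exists\,(x,y)\in G\cap(0,+\infty)^2\}$ and then using translation by $-g$ together with a passage to the limit to derive $x'\le 0$; this is slick but slightly delicate (one has to check that the enlarged $t$ still witnesses the hypotheses). You instead fix one element $h\in G\cap(0,+\infty)^2$, note that (1) forces $h_1\ge t$, and explicitly exhibit an element of $G\cap U$ of the form $h-kg$, where $k$ is the largest positive integer with $kg_1<h_1$; the pigeonhole observation $kg_1\ge h_1-g_1>h_1-t$ then closes the argument. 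Your version is more elementary and constructive, avoids any limiting process, and makes transparent exactly where the hypothesis $G\cap(0,+\infty)^2\ne\emptyset$ enters (your counterexample $G=\Z\times\{0\}$, $t>1$ correctly shows it cannot be dropped). Both proofs are complete; yours trades the paper's brevity for explicitness.
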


\begin{proof} Define $\tilde{U}=(0,t)\times \R \subset \R^2$. We claim that 
$G\cap U=\emptyset$ if and only if $G\cap \tilde{U}=\emptyset$. Indeed,
suppose by contradiction that $G\cap U=\emptyset\ne G\cap \tilde{U}$. There exists $g=(x',y')\in G$ with $0<x'<t$ and $y'\le 0$. We may enlarge $t$ and suppose $t=\inf\{x \vert \exists \ (x,y)\in G\cap (0,+\infty)^2\}$.
If $(x,y)\in G\cap (0,+\infty)^2$ then
$(x-x',y-y')\in G\cap (0,+\infty)^2$. At the limit, we obtain 
$t\le t-x'$, that is $x'\le 0$. Contradiction!

Consider the projection $\pi\colon \R^2\to \R$, $\pi(x,y)=x$.
Then $\pi(G)=H$ is a subgroup of $\R$ and $\tilde{U}=\pi^{-1}((0,t))$.
Therefore (1) is equivalent to
$H\cap (0,t)=\emptyset$. By Lemma~\ref{d1}, this is equivalent to
$H^*\cap (0,\frac{1}{t}]\ne \emptyset$. The latter is equivalent to (2).
\end{proof}

\begin{lem}\label{ca}
Let $G\le V$ and $\varphi\in V^*\setminus 0$. The following are equivalent:
\begin{itemize}
\item[(1)] $G\le \varphi^*$, $G\cap \varphi^\perp=0$.
\item[(2)] $G=0$, or $G=\Z v$ with $\langle \varphi,v\rangle\in \Z_{>0}$.
\end{itemize}
\end{lem}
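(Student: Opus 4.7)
The plan is to view the pairing with $\varphi$ as a group homomorphism from $G$ to $\R$ and to analyse its image and kernel under the two conditions in~(1). The whole lemma is really just the classification of subgroups of $\Z$ in disguise.

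For the implication (1)~$\Rightarrow$~(2), the assumption $G\le \varphi^*$ means that the restriction $\varphi|_G\colon G\to \R$ takes values in $\Z$, so it factors as a homomorphism $G\to \Z$. The assumption $G\cap \varphi^\perp=0$ says precisely that its kernel is trivial, so $\varphi|_G$ is an injection of $G$ into $\Z$. Every subgroup of $\Z$ is either $0$ or $n\Z$ for a unique $n\in\Z_{>0}$. In the first case $G=0$; in the second, there is a unique $v\in G$ with $\langle\varphi,v\rangle=n>0$, and injectivity of $\varphi|_G$ forces $G=\Z v$, which is the form claimed in~(2).

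For (2)~$\Rightarrow$~(1), the case $G=0$ is trivial. Otherwise $G=\Z v$ with $n:=\langle\varphi,v\rangle\in \Z_{>0}$; then $\langle\varphi,kv\rangle=kn\in\Z$ for every $k\in\Z$, so $G\le \varphi^*$, and $\langle\varphi,kv\rangle=0$ forces $k=0$ because $n\ne 0$, so $G\cap \varphi^\perp=0$.

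No genuine obstacle is anticipated: the only subtle point is the positivity of the generator, which is secured by replacing $v$ by $-v$ if necessary, since the image $n\Z$ only pins down $|n|$.
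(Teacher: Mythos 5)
Your proof is correct and follows essentially the same route as the paper: both arguments analyse the image subgroup $\langle\varphi,G\rangle\le\Z$, split into the cases $0$ and $q\Z$ with $q\ge 1$, and use the triviality of $G\cap\varphi^\perp$ (injectivity of $\varphi|_G$) to conclude $G=\Z v$ for the preimage $v$ of the positive generator. The closing remark about replacing $v$ by $-v$ is unnecessary, since you already chose $v$ with $\langle\varphi,v\rangle=n>0$, but this does not affect correctness.
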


\begin{proof} The implication $(2)\Longrightarrow (1)$ is clear.
	For the converse, consider the subgroup $\langle \varphi,G\rangle\le \Z$. If zero, then $G=G\cap \varphi^\perp=0$. If non-zero, it equals $q\Z$ for some $q\ge 1$. Choose $v\in G$ with $\varphi(v)=q$. Then 
	$G=\Z v+G\cap \varphi^\perp=\Z v$.
\end{proof}

\begin{lem}\label{cd}
Let $G\le V$ and $\varphi \in V^*\setminus 0$. The following
are equivalent:
\begin{itemize}
\item[(1)] $\langle \varphi,G\rangle=\Z$ and $G\cap \varphi^\perp$ is dense in $\varphi^\perp$.
\item[(2)] $G$ is a dense subgroup of $\varphi^*$.
\end{itemize}
\end{lem}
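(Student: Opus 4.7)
The plan is to exploit the continuity of $\varphi$ as a linear functional on $V$, together with the discreteness of $\Z\subset \R$, and the short exact sequence
\[
0\to \varphi^\perp \to \varphi^* \overset{\varphi}\to \Z\to 0.
\]
The hypothesis $\varphi\ne 0$ ensures surjectivity onto $\Z$, since $\varphi\colon V\to \R$ is surjective and $\varphi^*=\varphi^{-1}(\Z)$. Thus $\langle \varphi,\varphi^*\rangle=\Z$.

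For $(2)\Longrightarrow (1)$, suppose $G$ is dense in $\varphi^*$. Since $\varphi$ is continuous, $\varphi(G)$ is dense in $\varphi(\varphi^*)=\Z$; but $\Z$ is discrete, so $\varphi(G)=\Z$. For the second claim, fix $w\in \varphi^\perp$ and pick $g_n\in G$ with $g_n\to w$. Then $\varphi(g_n)\to \varphi(w)=0$ in $\Z$, so eventually $\varphi(g_n)=0$, that is $g_n\in G\cap \varphi^\perp$. Hence $G\cap \varphi^\perp$ is dense in $\varphi^\perp$.

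For $(1)\Longrightarrow (2)$, fix $v\in \varphi^*$ and set $n=\varphi(v)\in \Z$. By the hypothesis $\langle \varphi,G\rangle=\Z$, choose $g_0\in G$ with $\varphi(g_0)=n$; then $v-g_0\in \varphi^\perp$. Density of $G\cap \varphi^\perp$ in $\varphi^\perp$ yields $h_k\in G\cap \varphi^\perp$ with $h_k\to v-g_0$, and therefore $g_0+h_k\in G$ converges to $v$.

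I expect no serious obstacle: the lemma merely records that density is compatible with the displayed split short exact sequence once one chooses a set-theoretic lift of the integer $n=\varphi(v)$ back to $G$. The two ingredients actually used are that $\varphi$ is continuous (to push density downstairs) and that $\Z$ is discrete (to upgrade approximate integers to exact integers); both are automatic.
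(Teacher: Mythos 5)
Your proof is correct and follows essentially the same route as the paper: both arguments rest on the splitting $\varphi^*=\Z v+\varphi^\perp$ coming from a lift $v\in G$ with $\varphi(v)=1$, together with the discreteness of $\Z$ to separate the $\varphi$-direction from $\varphi^\perp$. If anything, your version supplies the convergence details that the paper compresses into ``The equivalence follows.''
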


\begin{proof} Any subgroup $G\le \varphi^*$ is either a 
	subgroup of $\varphi^\perp$, or of the form $G=\Z v+G\cap \varphi^\perp$ where $\varphi(v)=q\in \Z_{\ge 1}$ (and then $\varphi^*=\Z\frac{v}{q}+\varphi^\perp$). If $G$ is dense in $\varphi^*$, then $\varphi(G)$ is dense in $\Z$, hence 
	$\varphi(G)=\Z$. The equivalence follows.
\end{proof}


\section{Combinatorial classification}


Let $V$ be a $2$-dimensional $\R$-vector space. Let $\sigma\subset V$ be a closed strongly convex $2$-dimensional cone. Let $\sigma^\vee\subset V^*$ be the dual cone inside the dual vector space. Let $\psi\in \sigma^\vee\setminus 0$. Define a functional
$$
\sigma^\vee\ni m\mapsto \gamma(m):=\sup\{t\ge 0 \vert \psi-tm\in \sigma^\vee\}
\in [0,+\infty].
$$
Since $0$ is the smallest face of $\sigma^\vee$, $\gamma(0)=+\infty$ and
$\gamma(m)<+\infty$ is a maximum for every $m\in \sigma^\vee\setminus 0$.
Note $\gamma(m)=0$ if and only if $m$ does not belong to the smallest face of $\sigma^\vee$ which contains $\psi$. 
The reciprocal $\sigma^\vee\ni m\mapsto \frac{1}{\gamma(m)}\in [0,+\infty]$ is convex, positively homogeneous, piecewise linear with respect to the barycentric subdivison of $\psi\in \sigma^\vee$.
The set 
$
\{m\in \sigma^\vee \vert \gamma(m)\ge 1 \}=\sigma^\vee\cap (\psi-\sigma^\vee)
$
is a compact convex polytope.

Let $G\le (V,+)$ be a subgroup such that $G\cap \Int \sigma \ne \emptyset$.
Define
$$
\lambda:=\lambda(\sigma,\psi,G)=\inf\{\langle \psi,e\rangle \vert  e\in G\cap \Int \sigma \}
$$
and suppose $\lambda>0$.

\begin{lem}\label{ex} 
There exists $m\in G^*\cap \sigma^\vee \setminus 0$
such that $\gamma(m)>0$.
\end{lem}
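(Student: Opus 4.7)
The plan is to reduce to the case $G$ closed, put $\sigma$ in standard form, and split on whether $\psi$ lies in $\Int\sigma^\vee$ or on an extremal ray of $\sigma^\vee$; the ray case is handled by Lemma~\ref{d0}, while the interior case follows from a short classification of closed subgroups of $\R^2$.

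First I would replace $G$ by its closure $\bar G$. The dual group is unchanged because any $\chi\in V^*$ with $\chi(G)\subset\Z$ satisfies $\chi(\bar G)\subset\overline{\chi(G)}\subset\Z$ by continuity, and the infimum $\lambda(\sigma,\psi,G)=\lambda(\sigma,\psi,\bar G)$ is unchanged by an easy approximation: any $e\in\bar G\cap\Int\sigma$ is the limit of elements $g_n\in G$, which for large $n$ lie in the open set $\Int\sigma$ with $\psi(g_n)\to\psi(e)$. After this reduction, I would choose coordinates identifying $\sigma$ with $\R^2_{\ge 0}$ (so $\sigma^\vee=\R^2_{\ge 0}$ in the dual basis) and write $\psi=(\psi_1,\psi_2)$ with $\psi_1,\psi_2\ge 0$, not both zero.

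Suppose $\psi$ lies on an extremal ray of $\sigma^\vee$, say $\psi_2=0$ and $\psi_1>0$. Then the hypothesis $\psi(e)\ge\lambda$ on $G\cap\Int\sigma$ becomes $e_1\ge \lambda/\psi_1$, so $G$ avoids the open strip $(0,\lambda/\psi_1)\times(0,\infty)$. Lemma~\ref{d0} then supplies $m=(m_1,0)\in G^*$ with $0<m_1\le \psi_1/\lambda$, which lies in $\sigma^\vee\setminus 0$ and satisfies $\gamma(m)=\psi_1/m_1>0$ by direct computation. Suppose instead that $\psi$ lies in $\Int\sigma^\vee$. Then every nonzero $m\in\sigma^\vee$ automatically has $\gamma(m)>0$, so it suffices to exhibit one in $G^*\cap\sigma^\vee$. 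I would handle this by cases on the closed subgroup $G\le\R^2$: if $G=\Z v$ is cyclic, one must have $v\in\pm\Int\sigma$, and $m:=\psi/\psi(v)$ works; if $G$ is a rank-two lattice, the dual $G^*$ is a full-rank lattice, which meets the two-dimensional cone $\sigma^\vee$ in infinitely many nonzero points; and if $G$ contains a one-parameter subgroup $\R w$, I would prove $\psi(w)=0$, forcing $\R w=\psi^\perp$ and (since $G=\psi^\perp$ is excluded by $\psi^\perp\cap\Int\sigma=\emptyset$ and $G=V$ is excluded by $\lambda=0$) $G=\psi^\perp+\Z v$ for some $v$ with $\psi(v)\ne 0$; then $m:=\psi/|\psi(v)|$ lies in $G^*\cap\sigma^\vee\setminus 0$.

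The main obstacle is the claim $\psi(w)=0$ in the last subcase. If $\psi(w)\ne 0$, fix any $e_0\in G\cap\Int\sigma$; since $\Int\sigma$ is open, $e_0+tw$ stays in $G\cap\Int\sigma$ for $t$ in a small interval about $0$, and $\psi(e_0+tw)=\psi(e_0)+t\psi(w)$ then takes arbitrarily small positive values on that interval, contradicting $\lambda>0$. Once this is in hand, the structure theorem for closed subgroups of $\R^2$ pins down $G$ in all three interior subcases, and the explicit $m$ constructed in each case delivers both $m\in G^*\cap\sigma^\vee\setminus 0$ and $\gamma(m)>0$.
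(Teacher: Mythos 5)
Your reduction to closed $G$, your coordinates, your boundary case via Lemma~\ref{d0}, and your cyclic and rank-two-lattice subcases are all fine (the first two coincide with the paper's own argument). The gap is in the subcase where the closed group $G$ contains a line $\R w$. The claim $\psi(w)=0$ is false, and the argument you give for it does not work: for $t$ in a \emph{small} interval about $0$, the values $\psi(e_0)+t\psi(w)$ stay near the fixed positive number $\psi(e_0)$; they do not become arbitrarily small. To push $\psi(e_0+tw)$ toward $0$ you would need $|t|$ of size roughly $\psi(e_0)/|\psi(w)|$, and nothing guarantees $e_0+tw$ remains in $\Int\sigma$ that long. Concretely, take $\sigma=\R^2_{\ge 0}$, $\psi=(1,1)$, $w=(1,0)$, $G=\R(1,0)+\Z(0,1)$: then $G\cap\Int\sigma=\{(x,n)\mid x>0,\ n\in\Z_{\ge 1}\}$, so $\lambda=1>0$, yet $\psi(w)=1\ne 0$. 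In this example your subsequent deductions ($\R w=\psi^\perp$, hence $m=\psi/|\psi(v)|$) collapse, even though the lemma's conclusion still holds there (take $m=(0,1)$).

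The subcase is repairable. What is true is that $\R w\cap\Int\sigma=\emptyset$ (otherwise $\epsilon w\in G\cap\Int\sigma$ for all small $\epsilon$ of the appropriate sign, giving $\lambda=0$), so by separation there exists $0\ne\varphi\in w^\perp\cap\sigma^\vee$; since any $m\in G^*$ must kill $\R w$, $G^*$ is a discrete subgroup of the line $w^\perp$, and one of $\pm\varphi_0$ for a generator $\varphi_0$ lies in $G^*\cap\sigma^\vee\setminus 0$ (and $G^*\ne 0$ because $G\ne\R^2$). Note also that the paper dispenses with your three interior subcases in one stroke: if $G^*\cap\sigma^\vee=0$, then the dual form of Lemma~\ref{cc} produces $e\in\Int\sigma$ with $\R e\subseteq\overline{G}$, which forces $\lambda=0$; that single application of Lemma~\ref{cc} is exactly the structure-of-closed-subgroups input you are reconstructing by hand.
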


\begin{proof} We may suppose $V=\R^2$, $\sigma=\R_{\ge 0}^2$ and $\psi=(a_1,a_2)\in \check{\R}^2$ with 
$a_1,a_2\ge 0$. 

Case $\psi\in \partial(\sigma^\vee)$. Here $\psi=(a_1,0)$
and $a_1>0$. Then $G\cap (0,\frac{\lambda}{a_1})\times (0,\infty)=\emptyset$.
By Lemma~\ref{d0}, there exists $(m_1,0)\in G^*$ with 
$0<m_1\le\frac{a_1}{\lambda}$. Then $(m_1,0)\in G^*\cap \sigma^\vee\setminus 0$
and $\gamma(m_1,0)=\frac{a_1}{m_1}\ge \lambda>0$.

Case $\psi\in \Int \sigma^\vee$. Here $a_1,a_2>0$.
If $G^*\cap \sigma^\vee= \{0\}$,
Lemma~\ref{cc} implies that there exists $e\in \Int \sigma$ with 
$\R e\subseteq \overline{G}$. Then $\lambda=0$, a contradiction.
Therefore there exists $m\in G^*\cap \sigma^\vee\setminus 0$.
In dual coordinates, $m=(m_1,m_2)$ and $\gamma(m)=\min(\frac{a_1}{m_1},\frac{a_2}{m_2})>0$.
\end{proof}

\begin{defn}
$\gamma:=\gamma(\sigma^\vee,\psi,G^*)=\sup\{\gamma(m) \vert m\in G^*\cap \sigma^\vee\setminus 0\}$.
\end{defn}

In particular, $\gamma>0$. We abused notation by denoting by $\gamma$ both a functional and a number,
but we hope the reader will easily distinguish them in context. 

\begin{lem}
$\lambda\ge \gamma$.
\end{lem}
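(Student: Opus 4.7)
\smallskip

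The plan is to establish $\lambda \ge \gamma$ by proving the following pointwise inequality: for every $e \in G \cap \Int \sigma$ and every $m \in G^* \cap \sigma^\vee \setminus 0$, we have $\langle \psi, e\rangle \ge \gamma(m)$. Once this is shown, taking the infimum over $e$ on the left and the supremum over $m$ on the right yields $\lambda \ge \gamma$.

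To prove the pointwise inequality, first I would exploit the fact, noted just before the definition of $\gamma$, that for $m \in \sigma^\vee \setminus 0$ the supremum defining $\gamma(m)$ is actually a maximum. Hence $\psi - \gamma(m)\, m \in \sigma^\vee$. Pairing with $e \in \sigma$ gives
\[
0 \;\le\; \langle \psi - \gamma(m)\, m,\, e\rangle \;=\; \langle \psi, e\rangle \;-\; \gamma(m)\,\langle m, e\rangle,
\]
so that $\langle \psi, e\rangle \ge \gamma(m)\,\langle m, e\rangle$. It remains to verify that $\langle m, e\rangle \ge 1$.

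The key point is twofold. On the one hand, $e \in G$ and $m \in G^*$, so by the very definition of the dual subgroup, $\langle m, e\rangle \in \Z$. On the other hand, since $\sigma$ is a strongly convex cone of full dimension, the interior of $\sigma$ is characterized by $\Int \sigma = \{v \in V \mid \langle m', v\rangle > 0 \text{ for all } m' \in \sigma^\vee \setminus 0\}$. Because $e \in \Int \sigma$ and $m \in \sigma^\vee \setminus 0$, this yields $\langle m, e\rangle > 0$. An integer that is strictly positive is at least $1$, so $\langle m, e\rangle \ge 1$ and therefore $\langle \psi, e\rangle \ge \gamma(m)$, as required.

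I do not expect a real obstacle here; the only subtlety is the strict positivity $\langle m, e\rangle > 0$, which uses that $\sigma$ is strongly convex and $2$-dimensional (hence the interior characterization via the dual cone applies), together with the fact that $m \ne 0$. Everything else is bookkeeping with the definitions of $\gamma(m)$, $G^*$, and duality.
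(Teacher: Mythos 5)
Your proof is correct and follows essentially the same route as the paper: use that $\gamma(m)$ is attained so $\psi-\gamma(m)m\in\sigma^\vee$, observe that $\langle m,e\rangle$ is a positive integer (hence $\ge 1$) for $e\in G\cap\Int\sigma$ and $m\in G^*\cap\sigma^\vee\setminus 0$, and then take infimum over $e$ and supremum over $m$. The only difference is that you spell out the strict positivity $\langle m,e\rangle>0$ in more detail than the paper does; this is a correct and harmless elaboration.
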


\begin{proof}
Let $m\in G^*\cap \sigma^\vee\setminus 0$. Then $\psi-\gamma(m)m\in \sigma^\vee$.
Let $e\in G\cap \Int \sigma$. Then $\langle m,e\rangle$ is a positive integer, so $\langle m,e\rangle\ge 1$. Therefore 
$\langle \psi,e\rangle\ge \langle\gamma(m)m,e\rangle\ge \gamma(m)$. We 
obtain
$$
\langle \psi,e\rangle\ge \gamma(m).
$$
Taking infimum after all $e$, and supremum after all $m$, we obtain
the claim.
\end{proof}

\begin{lem}\label{gmax}
$\gamma$ is a maximum.
\end{lem}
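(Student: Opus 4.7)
The plan is to take a maximizing sequence, extract a convergent subsequence by compactness, and identify its limit as a maximizer; the main work is to rule out the possibility that this limit is $0$. The preceding lemma's bound $\gamma\le \lambda<+\infty$ will be crucial for this.

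Pick $m_n\in G^*\cap \sigma^\vee\setminus 0$ with $\gamma(m_n)\to \gamma$; after discarding finitely many terms, $\gamma(m_n)\ge \gamma/2$. Since $\gamma$ is positively homogeneous of degree $-1$ (the identity $\gamma(tm)=\gamma(m)/t$ for $t>0$ is immediate from the definition), the sublevel sets $\{m\in\sigma^\vee:\gamma(m)\ge c\}$ equal $\tfrac{1}{c}P$ for each $c>0$, where $P=\sigma^\vee\cap(\psi-\sigma^\vee)$ is the compact polytope highlighted before the lemma. In particular all $m_n$ lie in the compact set $\tfrac{2}{\gamma}P$, and the same description makes $\gamma$ upper semi-continuous on $\sigma^\vee$. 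Because $G^*$ is closed in $V^*$ (an intersection of the closed sets $\{\varphi:\langle\varphi,g\rangle\in\Z\}$ over $g\in G$), after passing to a subsequence we have $m_n\to m^*$ with $m^*\in G^*\cap \sigma^\vee$.

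Suppose for contradiction $m^*=0$; write $m_n=t_nu_n$ with $t_n=|m_n|\to 0$ and $u_n$ a unit vector. Since $\gamma(m_n)>0$, the note preceding the lemma forces $m_n\in F$, the smallest face of $\sigma^\vee$ containing $\psi$; hence $u_n\in F$, and after extracting, $u_n\to u\in F$ with $|u|=1$. For any fixed $t\in\R$, set $k_n=\lfloor t/t_n\rfloor\in\Z$; then $k_nt_n\to t$, so $k_nm_n=k_nt_nu_n\to tu$. Since $k_nm_n\in G^*$ and $G^*$ is closed, $\R u\subset G^*$. Therefore $(0,+\infty)\cdot u\subset G^*\cap\sigma^\vee$, and the formula $\gamma(su)=\gamma(u)/s$ combined with $\gamma(u)>0$ (as $u\in F$) gives $\gamma(su)\to +\infty$ as $s\to 0^+$, contradicting $\gamma\le\lambda<+\infty$. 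Hence $m^*\ne 0$.

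Upper semi-continuity then yields $\gamma(m^*)\ge \limsup_n\gamma(m_n)=\gamma$, while $\gamma(m^*)\le\gamma$ by definition of the supremum; so the maximum is realized at $m^*$. The step I expect to be most delicate is excluding $m^*=0$: one must use that $G^*$ is a closed subgroup to upgrade an approximation in the direction $u$ to an actual ray $\R u\subset G^*$, and then exploit the $(-1)$-homogeneity of $\gamma$ to manufacture $\gamma$-values exceeding the a priori bound $\lambda$.
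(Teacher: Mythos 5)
Your proof is correct and takes essentially the same route as the paper: extract a convergent subsequence of a maximizing sequence inside the compact set $\sigma^\vee\cap c(\psi-\sigma^\vee)$, use closedness of $G^*$ and upper semi-continuity of $\gamma(\cdot)$ to identify the limit as a maximizer. Your careful exclusion of the limit being $0$ (via the face $F$, the ray $\R u\subset G^*$, and the $(-1)$-homogeneity forcing $\gamma=+\infty$) is a welcome addition, since the paper asserts $m\ne 0$ without spelling out this point.
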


\begin{proof} Since $\gamma\le \lambda$, $\gamma$ is a 
positive real number. Let $c>0$ with $c\gamma>1$.
Let $m^n\in G^*\cap \sigma^\vee\setminus 0$ be a sequence with
$\gamma(m^n)\uparrow \gamma$.
Then $c\gamma(m^n)>1$ for $n\gg 0$, and therefore 
$m^n\in (c\psi-\sigma^\vee)\cap \sigma^\vee$ for $n\gg 0$. 
The latter set is compact.
Therefore the sequence $m^n$ has a cluster point $m$. It follows that 
$m\in G^*\cap \sigma^\vee$ and $\gamma(m)=\gamma$. In particular, $m\ne 0$.
\end{proof}

By Lemma~\ref{gmax}, there exists $v^1\in G^*\cap \sigma^\vee\setminus 0$ 
with $\gamma(v^1)=\gamma$. 

\begin{lem}
$\langle v^1,G\rangle= \Z$.
\end{lem}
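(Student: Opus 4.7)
The plan is to exploit the maximality of $\gamma$. Since $v^1\in G^*$, the pairing $\langle v^1,G\rangle$ is an additive subgroup of $\Z$, hence equal to $q\Z$ for some integer $q\ge 0$. The goal is to show $q=1$, which I will do in two steps: first rule out $q=0$, and then derive a contradiction from $q\ge 2$ by rescaling $v^1$ to obtain a functional with strictly larger $\gamma$-value.

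For the first step, pick any $e\in G\cap\Int\sigma$ (which exists by hypothesis). Since $\sigma$ is a $2$-dimensional closed strongly convex cone and $e$ lies in its interior, the standard cone-duality fact tells us that every nonzero element of $\sigma^\vee$ is strictly positive on $e$. In particular $\langle v^1,e\rangle>0$, and since this number is also an integer (because $v^1\in G^*$ and $e\in G$), we get $\langle v^1,e\rangle\ge 1$. Thus $\langle v^1,G\rangle$ contains a positive integer, so $q\ge 1$.

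For the second step, suppose for contradiction that $q\ge 2$ and set $m:=v^1/q\in V^*$. Then $\langle m,G\rangle=\frac{1}{q}\cdot q\Z=\Z$, so $m\in G^*$; moreover $m\in\sigma^\vee\setminus 0$ because $\sigma^\vee$ is a cone and $v^1\ne 0$. The functional $\gamma(\cdot)$ is positively homogeneous of degree $-1$, since for $t>0$ the change of variable $s=s't$ gives
\[
\gamma(tm)=\sup\{s\ge 0\mid \psi-stm\in\sigma^\vee\}=\tfrac{1}{t}\gamma(m).
\]
Applied with $t=1/q$, this yields $\gamma(m)=q\gamma(v^1)=q\gamma>\gamma$, contradicting the definition of $\gamma$ as the supremum of $\gamma(\cdot)$ over $G^*\cap\sigma^\vee\setminus 0$. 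Hence $q=1$, which is the claim.

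The only delicate point is the strict positivity $\langle v^1,e\rangle>0$ in the first step; without it one could only conclude $\langle v^1,e\rangle\ge 0$, which would not rule out $\langle v^1,G\rangle=0$. But this is a routine consequence of $e\in\Int\sigma$ combined with $v^1\in\sigma^\vee\setminus 0$, so there is no real obstacle.
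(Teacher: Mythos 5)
Your proof is correct and is essentially the same as the paper's: both observe that $\langle v^1,G\rangle=q\Z$ with $q\ge 1$ (using that $v^1\in\sigma^\vee\setminus 0$ is strictly positive on $G\cap\Int\sigma\ne\emptyset$), and both then use $\frac{1}{q}v^1\in G^*\cap\sigma^\vee\setminus 0$ together with the homogeneity $\gamma(\frac{1}{q}v^1)=q\gamma$ and the maximality of $\gamma>0$ to force $q=1$. No gaps.
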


\begin{proof}
We have $\langle v^1,G\rangle\le \Z$.
Since $G\cap \Int \sigma \ne \emptyset$, $\langle v^1,G\rangle\ne 0$.
Therefore $\langle v^1,G\rangle=k\Z$ for some positive integer $k$. 
Then $\frac{1}{k}v^1\in G^*\cap \sigma^\vee \setminus 0$. We
have $\gamma(\frac{1}{k}v^1)=k\gamma(v^1)=k\gamma$. Then $k\gamma\le \gamma$,
so $k=1$.
\end{proof}

\begin{lem}
Suppose $\psi\in \partial(\sigma^\vee)$. Then $\lambda=\gamma$,
$\psi=\gamma v^1$, and $\lambda$ is a minimum.
\end{lem}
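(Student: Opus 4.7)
The plan is to first locate $v^1$ on the same extreme ray of $\sigma^\vee$ as $\psi$, then produce a concrete minimizer of $\langle \psi,\cdot\rangle$ on $G\cap \Int \sigma$.

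First, since $V$ is two-dimensional and $\sigma$ is strongly convex, $\sigma^\vee$ is likewise strongly convex and two-dimensional, with boundary the union of two extreme rays. By hypothesis, $\psi$ lies on one such ray $R$. The remark accompanying the definition says $\gamma(m)>0$ if and only if $m$ lies in the smallest face of $\sigma^\vee$ containing $\psi$, which here is exactly $R$. Since $\gamma(v^1)=\gamma>0$, we must have $v^1\in R$. Choosing any generator $r$ of $R$ and writing $v^1=c_1 r$, $\psi=c_2 r$ with $c_1,c_2>0$, the definition $\gamma=\gamma(v^1)=\sup\{t\ge 0:\psi-tv^1\in\sigma^\vee\}$ evaluates to $\gamma=c_2/c_1$, which yields $\psi=\gamma v^1$.

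The inequality $\lambda\ge \gamma$ is already known, so it suffices to produce $e\in G\cap \Int \sigma$ with $\langle \psi,e\rangle=\gamma$; this will simultaneously give $\lambda\le \gamma$ and show the infimum is attained. Because $\psi=\gamma v^1$, the task reduces to finding $e\in G\cap \Int \sigma$ with $\langle v^1,e\rangle=1$. Using the preceding lemma, $\langle v^1,G\rangle=\Z$, so I fix $e_0\in G$ with $\langle v^1,e_0\rangle=1$. If $e_0$ already lies in $\Int \sigma$ we are done. Otherwise, pick any $e\in G\cap \Int \sigma$, set $n=\langle v^1,e\rangle\in \Z_{\ge 1}$, and consider $f:=e-ne_0\in G\cap (v^1)^\perp$.

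The geometric step that requires care is to check that the translates $e_k:=e_0+kf$ eventually enter $\Int \sigma$. Choose coordinates with $\sigma=\R_{\ge 0}u_1+\R_{\ge 0}u_2$ such that $\langle v^1,u_1\rangle=0$ and $\langle v^1,u_2\rangle>0$; this is possible precisely because $v^1$ lies on one extreme ray of $\sigma^\vee$. Then $(v^1)^\perp=\R u_1$, so $f$ is a nonzero scalar multiple of $u_1$, and after replacing $f$ by $-f$ if necessary I may assume $f\in \R_{>0}u_1$. The $u_2$-coefficient of $e_k$ equals that of $e_0$, namely $1/\langle v^1,u_2\rangle>0$, for every $k$, while its $u_1$-coefficient grows without bound. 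For $k$ sufficiently large, $e_k\in G\cap \Int \sigma$ and $\langle v^1,e_k\rangle=1$, so $\langle \psi,e_k\rangle=\gamma$. This yields $\lambda\le \gamma$ and exhibits $e_k$ as the required minimizer.
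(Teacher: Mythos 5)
Your proof is correct, but it takes a genuinely different route from the paper's. The paper recycles the first half of the proof of Lemma~\ref{ex}: writing $\psi=(a_1,0)$ in coordinates, it applies the strip Lemma~\ref{d0} to produce $(m_1,0)\in G^*$ with $0<m_1\le a_1/\lambda$, hence $\gamma(m_1,0)\ge\lambda$; combined with the already known inequality $\lambda\ge\gamma$ this forces $\gamma(m_1,0)=\gamma=\lambda$ and $\psi=\gamma v^1$. You avoid Lemma~\ref{d0} entirely: from the note that $\gamma(m)>0$ forces $m$ into the minimal face of $\sigma^\vee$ containing $\psi$, you place $v^1$ on the extreme ray through $\psi$ and get $\psi=\gamma v^1$ at once, and then you produce an explicit minimizer by translating a preimage of $1$ under $\langle v^1,\cdot\rangle$ along $(v^1)^\perp$ into $\Int\sigma$. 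This ``primal'' construction has the merit of making the minimality of $\lambda$ completely explicit, a point the paper's proof leaves implicit (there it rests on the discreteness of the projection of $G$ hidden in Lemma~\ref{d0}). One small omission: you should justify that $f=e-ne_0\ne 0$ in the second case; this holds because $f=0$ would give $e_0=e/n\in\Int\sigma$, contradicting the case assumption that $e_0\notin\Int\sigma$.
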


\begin{proof} We use the notations and argument in the first part
of the proof of Lemma~\ref{ex}. We constructed 
$m=(m_1,0)\in G^*\cap \partial(\sigma^\vee)$ with $\gamma(m)\ge \lambda$.
But $\lambda\ge \gamma$, so $\gamma(m)=\lambda=\gamma$.
Equivalently, $\psi=\gamma m$. In particular, $\psi$ is a minimum
and $m=v^1$ (so $v^1$ is unique maximizer in this case).
\end{proof}

We suppose from now on that $\psi\in \Int \sigma^\vee$.
Consider the subgroup $G\cap (v^1)^\perp\le (v^1)^\perp$. 
Since $(v^1)^\perp\simeq \R$, we distinguish there cases:
\begin{itemize}
\item[(I)] $G\cap (v^1)^\perp=0$.
\item[(II)] $G\cap (v^1)^\perp$ is dense in $(v^1)^\perp$.
\item[(III)] $G\cap (v^1)^\perp$ is non-zero and discrete.
\end{itemize}

{\em Case (I)}: since $G\cap \Int\sigma \ne \emptyset$, Lemma~\ref{ca} 
gives $G=\Z e$ for some $e\in \Int \sigma$. Then:
\begin{itemize}
\item $\lambda$ is a minimum, $\lambda=\gamma, \psi=\gamma v^1$.
\item The set $\{m\in \sigma^\vee \vert \langle m,e\rangle=1\}\subset G^*$ is a closed segment with endpoints $m^1,m^2$, $\psi=\gamma(m^1)m^1+\gamma(m^2)m^2$ and $\lambda=\gamma(m^1)+\gamma(m^2)$.
\end{itemize}
So $v^1$ is unique maximizer in this case. Note that $m^1,m^2$ generate the cone $\sigma^\vee$.

\begin{proof} Since $G\cap \Int\sigma=\Z_{\ge 1}e$, we obtain
$\lambda=\langle \psi,e\rangle$.
Since $\langle \frac{\psi}{\lambda},e\rangle =1$, we obtain
$\frac{\psi}{\lambda}\in G^*$. We compute 
$\gamma(\frac{\psi}{\lambda})=\lambda$.
Let $m\in G^*\cap \sigma^\vee\setminus 0$. Then 
$\langle m,e\rangle=q$ is a positive integer and
$
\langle \psi-\gamma(m)m,e\rangle\ge 0.
$
Then $\gamma(m)\le \frac{\lambda}{q}$. Therefore
$\gamma(m)\le \lambda$. In particular, $\gamma=\lambda$.

We have $\langle v^1,e\rangle=1$. 
Since $\langle \psi-\gamma v^1,e\rangle=\lambda-\gamma=0$
and $e\in \Int \sigma$, we obtain $\psi=\gamma v^1$.

Choose coordinates such that $\sigma=\{(x_1,x_2)\in \R^2 \vert x_1,x_2\ge 0\}$
and $\sigma^\vee=\{(m_1,m_2)\in \check{\R}^2 \vert m_1,m_2\ge 0\}$. Let
$e=(\alpha,\beta)$ and $\psi=(a_1,a_2)$. The segment
$\{m\in \sigma^\vee \vert \langle m,e\rangle=1\}$ has endpoints $m^1=(\frac{1}{\alpha},0)$ and $m^2=(0,\frac{1}{\beta})$. One computes 
$$
\gamma(m_1,m_2)=\min(\frac{a_1}{m_1},\frac{a_2}{m_2}).
$$
Note that $a_i>0$, so if $m_i=0$ we set $\frac{a_i}{m_i}=+\infty$.
Then $\gamma(m^1)=a_1\alpha,\gamma(m^2)=a_2\beta$. In particular,
$\psi=\gamma(m^1)m^1+\gamma(m^2)m^2$. Finally,
$\lambda=\langle \psi,e\rangle=a_1\alpha+a_2\beta$.
\end{proof}

{\em Case (II)}: By Lemma~\ref{cd}, $G$ is dense in 
$(v^1)^*$. By duality for subgroups, this is equivalent to 
$G^*=\Z v^1$. Then
\begin{itemize}
\item $\lambda=\gamma$.
\item $\lambda$ is a minimum if and only if $\psi=\gamma v^1$.
\end{itemize}

\begin{proof}
By assumption, $\psi-\gamma v^1\in \partial(\sigma^\vee)$. We distinguish two cases:

Suppose $\psi-\gamma v^1=0$. Then $\lambda=\gamma$ and $\lambda$ is
a minimum.

Suppose $\psi-\gamma v^1\ne 0$. Then $\psi-\gamma v^1$ belongs to the boundary of $\sigma^\vee$. In coordinates,
$\sigma=\{(x_1,x_2) \vert x_1,x_2\ge 0\}$,
$\sigma^\vee=\{(m_1,m_2) \vert m_1,m_2\ge 0\}$, $v^1=(\alpha,\beta)$
and $\psi=(a_1,a_2)$. Then 
$$
\gamma=\min(\frac{a_1}{\alpha},\frac{a_2}{\beta}). 
$$
On the other hand, $G$ is dense in $(v^1)^*$, so 
$$
\lambda=\inf\{a_1x_1+a_2x_2 \vert x_1,x_2>0,\alpha x_1+\beta x_2\in \Z_{\ge 1} \}.
$$
It follows that $\lambda=\min(\frac{a_1}{\alpha},\frac{a_2}{\beta})$.
Therefore $\lambda=\gamma$. Finally, we check that $\lambda$ is
not a minimum. Let $e\in G\cap \Int \sigma$. 
Since $\psi-\gamma v^1\in \sigma^\vee\setminus 0$,
we have $\langle \psi-\gamma v^1,e\rangle>0$. 
Also, $\langle v^1,e\rangle\ge 1$. Then
$$
\langle \psi,e\rangle\ge \langle \psi-\gamma v^1,e\rangle+
\gamma \langle v^1,e\rangle>\gamma
$$
Therefore $\langle \psi,e\rangle>\lambda$.
\end{proof}

{\em Case (III)}: Here $G\cap (v^1)^\perp=\Z e_2$ for some $e_2\ne 0$.
After possibly replacing $e_2$ by $-e_2$, we may suppose 
$$
\psi':=\langle \gamma^{-1}\psi- v^1,e_2\rangle\ge 0.
$$
Note $\langle \psi,e_2\rangle=\gamma\psi'$. Choose $e_1\in G$ with $\langle v^1,e_1\rangle=1$. 
It follows that $G$ is a $2$-dimensional lattice with basis $e_1,e_2$.
There exist $\alpha<\beta\le +\infty$ such that 
$$
\{e\in \Int \sigma \vert \langle v^1,e\rangle=1\}=\{e_1+te_2 \vert \alpha<t<\beta\}.
$$
By assumption, $\psi-\gamma v^1\in \partial(\sigma^\vee)$. Since $\psi' \ge 0$, we obtain 
$$
\langle \psi-\gamma v^1,e_1+\alpha e_2\rangle =0.
$$
Replacing $e_1$ by $e_1+\lfloor \alpha\rfloor e_2$, we may suppose 
$$
0\le \alpha<1.
$$

Define $v^2\in G^*$ by $\langle v^2,e_1\rangle=0,\langle v^2,e_2\rangle=1$.
Then $\{v^1,v^2\}$ is a basis of the lattice $G^*$, dual to the basis 
$\{e_1,e_2\}$.

Note that $\psi'=0$ if and only if $\psi=\gamma v^1$.
Indeed, suppose $\psi'=0$. Then $\psi-\gamma v^1$ is orthogonal on
both $e_1+\alpha e_2$ and $e_2$. Therefore $\psi-\gamma v^1=0$. 

If $v^1\in \partial(\sigma^\vee)$ then $\beta=+\infty$, and 
if $v^1\in \Int \sigma^\vee$ then $\beta<\infty$.
We distinguish two subcases.

\underline{Subcase $v^1\in \partial(\sigma^\vee)$}. 
Here $\sigma=\R_+(e_1+\alpha e_2)+\R_+ e_2$. Since $\psi\in \Int\sigma^\vee$,
we have $\psi\ne \gamma v^1$ and therefore $\psi'> 0$. We compute $\langle \psi,e_1+\alpha e_2\rangle=\gamma$,
$\langle \psi,e_2\rangle=\gamma\psi'$. Therefore
$$
\gamma(m)=\gamma \cdot \min( 
\frac{1}{\langle m,e_1+\alpha e_2\rangle},\frac{\psi'}{\langle m,e_2\rangle}), \ m\in G^*\cap \sigma^\vee\setminus 0.
$$
We claim that $v^1$ maximizes $\gamma(\cdot)$ if and only if
$$
\psi'\le 1.
$$ 
Indeed, $v^2\in G^*\cap \sigma^\vee$ and 
$\gamma(v^2)=\gamma\min(\frac{1}{\alpha},\psi')$ is at most $\gamma(v^1)=\gamma$
if and only if $\psi'\le 1$. Conversely, suppose $\psi'\le 1$. Let $m\in G^*\cap \sigma^\vee\setminus 0$.
If $\langle m,e_2\rangle=0$, then $m=qv^1$ for some $q\in \Z_{\ge 1}$ and 
$\gamma(m)=\frac{\gamma(v^1)}{q}\le \gamma(v^1)$. If $\langle m,e_2\rangle>0$, then 
$\langle m,e_2\rangle\ge 1$ and $\frac{\psi'}{\langle m,e_2\rangle} \le 1$.

If $\psi'< 1$, $v^1$ is the unique maximizer. If $\psi'=1$ and $\alpha\ne 0$, the maximizers are $v^1,v^2$.
If $\psi'=1$ and $\alpha=0$, the maximizers are $v^1,v^2,v^1+v^2$.

We have $G\cap \Int \sigma =\{p_1e_1+p_2e_2\vert p_1,p_2\in \Z, p_1>0,
p_2>p_1\alpha\}$. We compute 
$$
\langle \psi-\gamma v^1,p_1e_2+p_2e_2\rangle=\gamma \psi' (p_2-p_1\alpha),
\
\langle \psi,p_1e_2+p_2e_2\rangle=
\gamma p_1+\gamma \psi' (p_2-p_1\alpha).
$$ 
Therefore
$
\lambda=\gamma\inf\{p_1+\psi'(1-\{p_1\alpha\}) \vert p_1\ge 1\}.
$
Since $0< \psi'\le 1$, we obtain
$$
\lambda=\gamma+\gamma \psi'(1-\alpha).
$$
Moreover, $\lambda$ is attained only by $e_1+e_2$.

Recall $\gamma(v^2)=\gamma \psi'$. Denote $m^1=v^1,m^2=v^2$. Then
$$
\psi=\gamma(1-\psi'\alpha)m^1+\gamma\psi' m^2.
$$

Finally, we compute
$
\lambda'=\inf\{\langle \psi-\gamma v^1,e\rangle \vert e\in G\cap \Int \sigma \},
$
hence
$$
\lambda'=\gamma\psi'\inf\{1-\{p_1\alpha\} \vert p_1\ge 1\}.
$$
If $\alpha\notin \Q$, then $\lambda'=0$. If $\alpha\in \Q$, then
$\lambda'=\frac{1}{q}\gamma\psi'$, where $q\ge 1$ is minimal with $q\alpha\in \Z$.

\underline{Subcase $v^1\in \Int \sigma^\vee$}. 
Here $\sigma=\R_+(e_1+\alpha e_2)+\R_+ (e_1+\beta e_2)$.
We compute $\langle \psi,e_1+\alpha e_2\rangle=\gamma$ and $\langle \psi,e_1+\beta e_2\rangle=\gamma+\gamma\psi'(\beta-\alpha)$.
Therefore
$$
\gamma(m)=\gamma\min(\frac{1}{\langle m,e_1+\alpha e_2\rangle},
\frac{1+\psi'(\beta-\alpha)}{\langle m,e_1+\beta e_2\rangle}), \ m\in G^*\cap \sigma^\vee\setminus 0.
$$
Denote $c=1+\psi'(\beta-\alpha)$.
The property that $v^1$ maximizes $\gamma(\cdot)$ is equivalent to the inequalities
$$
\max(\langle m,e_1+\alpha e_2\rangle,
\frac{\langle m,e_1+\beta e_2\rangle}{c})\ge 1 \ \forall m\in G^*\cap \sigma^\vee\setminus 0.
$$
If $\langle m,e_1+\alpha e_2\rangle\ge 1$, the inequality is satisfied. Therefore suffices to impose the inequalities in the case $\langle m,e_1+\alpha e_2\rangle< 1$.
But $\langle m,e_1+\alpha e_2\rangle\ge 0$. This is equivalent to $m=-\lfloor z_2\alpha\rfloor v^1+z_2v^2$.
Note that such $m$ belongs to $G^*\cap \sigma^\vee\setminus 0$ if and only if $z_2\in \Z\setminus 0$
and $z_2\beta\ge \lfloor z_2\alpha\rfloor$. With the notation $n=|z_2|$,
we conclude that $v^1$ maximizes $\gamma(\cdot)$ if and only if 
\begin{itemize}
	\item[a)] $n\beta-\lfloor n\alpha\rfloor\ge c$ for all $n\in \Z_{\ge 1}$, and 
	\item[b)] If $n\in \Z_{\ge 1}$ and $\lceil n\alpha\rceil\ge n\beta$, then $\lceil n\alpha\rceil- n\beta\ge c$.
\end{itemize}
 
From a) with $n=1$ we obtain (same as imposing $\gamma(v^2)\le \gamma(v^1)$)
$$
\beta\ge 1+\psi'(\beta-\alpha).
$$
In particular, 
$\psi'\le \frac{\beta-1}{\beta-\alpha}<1$. Therefore 
$$
0\le \psi'<1\le \beta.
$$
For $p_1,p_2\in \Z$,
an element $p_1e_1+p_2e_2$ belongs to $G\cap \Int \sigma$ if and only 
if $p_1\ge 1$ and $\alpha p_1<p_2<\beta p_1$. Therefore we obtain
$$
\lambda=\gamma \inf\{p_1+\psi'(p_2-\alpha p_1) \vert p_1>0,\alpha p_1<p_2<\beta p_1\}.
$$

Since $\beta\ge 1$, we have two possibilities:

(A) $\beta=1$. It follows that $\psi'=0$. From b) with $n=1$, we obtain $\alpha=0$. Here $\sigma$ is spanned by $\{e_1,e_1+e_2\}$, a basis of $G$.
The dual basis $\{m^1,m^2\}$ of $G^*$ spans $\sigma^\vee$,  
$v^1=m^1+m^2$, $\psi=\gamma v^1=\gamma m^1+\gamma m^2$, and
$
\lambda=2\gamma.
$

(B) $\beta>1$. Here $e_1+e_2\in G\cap \Int \sigma$. We obtain
$$
\lambda=\gamma(1+\psi'(1-\alpha)).
$$ 
Moreover, $\lambda$ is attained only by $e_1+e_2$ if $\psi'>0$.

We have $v^2\in G^*\cap \sigma^\vee$ and
$\gamma(v^2)=\gamma\frac{1+\psi'(\beta-\alpha)}{\beta}$.
Denote $m^1=v^1,m^2=v^2$. Then
$$
\psi=\gamma(1-\psi'\alpha)m^1+\gamma\psi'm^2.
$$
Finally, we compute 
$\lambda'=\inf\{\langle \psi-\gamma v^1,e\rangle \vert e\in G\cap \Int \sigma\}$.
A direct computation gives
$$
\lambda'=\gamma\psi'\inf\{p_2-\alpha p_1 \vert p_1\ge 1,\alpha p_1<p_2<\beta p_1\}.
$$
In case (A), $\psi'=0$ so $\lambda'=0$. In case (B), we obtain
$$
\lambda'=\gamma\psi'\inf\{1-\{p_1\alpha\} \vert p_1\ge 1\}.
$$
If $\alpha\notin \Q$, then $\lambda'=0$. Otherwise, let $q\ge 1$ minimal
such that $q\alpha\in \Z$. Then $\lambda'=\frac{1}{q}\gamma\psi'$.


\subsection{Conclusion in case (III)}

We constructed a basis $m^1,m^2$ of the two dimensional lattice $G^*$ such that $m^1,m^2\in \sigma^\vee$ and 
$$
\psi=(\lambda-\frac{\lambda-\gamma}{1-\alpha})m^1+ 
\frac{\lambda-\gamma}{1-\alpha}m^2.
$$

\begin{proof}
In case (A), $\alpha=0$, $\lambda=2\gamma$ and $\psi=\gamma m^1+\gamma m^2$, so the claim holds. In case (B), we have
$
\lambda-\gamma=\gamma \psi'(1-\alpha),
$
and the claim holds.
\end{proof}

We have $\psi'=0$ in case (A) where $\lambda=2\gamma$, and in case (B) when $\lambda=\gamma$. If $\psi'>0$, then $\lambda>\gamma$.

\begin{lem}\label{dif}
Let $t\le \lambda$. Then 
$\psi-[(t-\frac{t-\gamma}{1-\alpha})m^1+ 
\frac{t-\gamma}{1-\alpha}m^2]\in \partial(\sigma^\vee)$.
\end{lem}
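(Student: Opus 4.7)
The plan is to reduce the statement to a direct algebraic computation using the formula
$\psi=(\lambda-\tfrac{\lambda-\gamma}{1-\alpha})m^1+\tfrac{\lambda-\gamma}{1-\alpha}m^2$
established immediately before the lemma, and then to verify that the resulting vector lies on a one-dimensional face of $\sigma^\vee$. I do not expect any genuine obstacle: everything is linear, and the geometric meaning of $-\alpha m^1+m^2$ has already been identified in the case analysis above.

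First I would rewrite the bracketed term as $A(t)m^1+B(t)m^2$ with $A(t)=t-\tfrac{t-\gamma}{1-\alpha}=\tfrac{\gamma-t\alpha}{1-\alpha}$ and $B(t)=\tfrac{t-\gamma}{1-\alpha}$, and similarly $\psi=A(\lambda)m^1+B(\lambda)m^2$. Subtracting term by term and using $1-\alpha>0$, a one-line calculation collapses the difference into
\[
\psi-A(t)m^1-B(t)m^2=\frac{\lambda-t}{1-\alpha}\bigl(-\alpha m^1+m^2\bigr).
\]
Since $t\le\lambda$ and $0\le\alpha<1$, the scalar factor $\tfrac{\lambda-t}{1-\alpha}$ is a nonnegative real number, so it suffices to show $-\alpha m^1+m^2\in\partial(\sigma^\vee)$.

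Next I would check this boundary membership in the three configurations distinguished in Case (III). In the subcase $v^1\in\partial(\sigma^\vee)$, we have $m^1=v^1$, $m^2=v^2$ and $\sigma=\R_{\ge 0}(e_1+\alpha e_2)+\R_{\ge 0}e_2$; pairing gives $\langle-\alpha v^1+v^2,e_1+\alpha e_2\rangle=0$, so $-\alpha m^1+m^2$ lies on the extreme ray of $\sigma^\vee$ orthogonal to $e_1+\alpha e_2$. In Case~(B), again $m^i=v^i$ and $\sigma=\R_{\ge 0}(e_1+\alpha e_2)+\R_{\ge 0}(e_1+\beta e_2)$; the same pairing vanishes on $e_1+\alpha e_2$ and is positive on $e_1+\beta e_2$, so $-\alpha m^1+m^2$ again spans an extreme ray of $\sigma^\vee$. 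In Case~(A), $\alpha=0$ and $-\alpha m^1+m^2=m^2$; here $\{m^1,m^2\}$ is the dual basis of the generators $\{e_1,e_1+e_2\}$ of $\sigma$, so $m^2$ is, by definition, orthogonal to $e_1$ and thus spans an extreme ray of $\sigma^\vee$.

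In all three configurations, $-\alpha m^1+m^2\in\partial(\sigma^\vee)$, so multiplication by the nonnegative scalar $\tfrac{\lambda-t}{1-\alpha}$ keeps it in $\partial(\sigma^\vee)$, completing the proof. The only subtlety worth flagging is that in Case~(A) the basis $m^1,m^2$ is defined differently than in the other subcases; I would point this out explicitly so that the reader does not try to apply the generic $\{v^1,v^2\}$ description there.
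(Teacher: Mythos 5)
Your proposal is correct and follows essentially the same route as the paper: both reduce the claim to the identity $\psi-[A(t)m^1+B(t)m^2]=\frac{\lambda-t}{1-\alpha}(-\alpha m^1+m^2)$ and then verify that $-\alpha m^1+m^2$ pairs to zero against the generator $e_1+\alpha e_2$ of $\sigma$ (and positively against the other generator), hence lies on an extreme ray of $\sigma^\vee$. Your explicit remark that in Case~(A) the basis $m^1,m^2$ is the dual basis of $\{e_1,e_1+e_2\}$ rather than $\{v^1,v^2\}$ is a useful clarification the paper leaves implicit, but it does not change the argument.
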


\begin{proof} Denote $\varphi=-\alpha m^1+m^2$. We claim that 
$\varphi\in \partial(\sigma^\vee)$. Indeed, $\langle \varphi,e_1+\alpha e_2\rangle=0$ and $\langle \varphi,e_1+\beta e_2\rangle$ equals $\beta-\alpha$ if $\beta<+\infty$, and $1$ if $\beta=+\infty$.
Finally, we compute
$$
\psi-[(t-\frac{t-\gamma}{1-\alpha})m^1+ 
\frac{t-\gamma}{1-\alpha}m^2]=
\frac{\lambda-t}{1-\alpha}\varphi.
$$

\end{proof}

\begin{prop}
$\gamma\le \lambda\le 2\gamma$.
\end{prop}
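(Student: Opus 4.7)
The lower bound $\gamma\le\lambda$ was already established in the lemma immediately following the definition of $\gamma$. For the upper bound $\lambda\le 2\gamma$, my plan is to assemble the explicit formulas already computed in the case analysis above, rather than to find a single uniform estimate. The easy cases are immediate: if $\psi\in\partial(\sigma^\vee)$, or if $\psi\in\Int\sigma^\vee$ and we are in Case (I) (where $G\cap(v^1)^\perp=0$) or Case (II) (where $G\cap(v^1)^\perp$ is dense), then the preceding analysis produced $\lambda=\gamma$ and there is nothing to prove.

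The substance is in Case (III). In the subcase $v^1\in\partial(\sigma^\vee)$, the computation yielded $\lambda=\gamma+\gamma\psi'(1-\alpha)$, with the maximality of $v^1$ forcing $\psi'\le 1$ and the chosen normalization of $e_1$ forcing $0\le\alpha<1$; therefore $\psi'(1-\alpha)\le 1$ and $\lambda\le 2\gamma$. In the subcase $v^1\in\Int\sigma^\vee$, sub-subcase (A) (i.e.\ $\beta=1$) yields $\lambda=2\gamma$ directly, while sub-subcase (B) gives $\lambda=\gamma(1+\psi'(1-\alpha))$ together with the strict bound $\psi'\le\frac{\beta-1}{\beta-\alpha}<1$ coming from condition a) at $n=1$ (equivalently, from $\gamma(v^2)\le\gamma(v^1)$), whence $\lambda<2\gamma$.

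No new argument is needed: the proposition is a bookkeeping summary of the preceding case analysis, and the one step that deserves attention is verifying that in each subcase of (III) the maximality constraint on $v^1$ really does pin down the claimed bound on $\psi'$; once that is granted, $\psi'(1-\alpha)\le 1$ is trivial from $0\le\alpha<1$. Equality $\lambda=2\gamma$ is attained precisely in sub-subcase (A), and is strictly missed in all other cases.
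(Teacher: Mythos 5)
Your argument is correct and is exactly the proof the paper intends: the proposition is stated without a separate proof precisely because it is a bookkeeping summary of the preceding case analysis, and your assembly of the formulas ($\lambda=\gamma$ outside Case (III); $\lambda=\gamma(1+\psi'(1-\alpha))$ with $0\le\psi'\le 1$ and $0\le\alpha<1$ in Case (III); $\lambda=2\gamma$ in sub-subcase (A)) is that summary. One small caveat on your closing aside, which is not part of the statement being proved: equality $\lambda=2\gamma$ is not attained \emph{only} in sub-subcase (A) --- in the subcase $v^1\in\partial(\sigma^\vee)$ with $\psi'=1$ and $\alpha=0$ one also gets $\lambda=2\gamma$ (e.g.\ $G=\Z^2$, $\sigma=\R_{\ge 0}^2$, $\psi=(1,1)$, with the maximizer $v^1=(1,0)$ chosen on the boundary; the same configuration lands in (A) only if one instead picks the interior maximizer $v^1=(1,1)$).
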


\begin{prop}
If $\lambda>\gamma$, then $G\simeq \Z^2$ and $\lambda$ is attained
by a unique element.
\end{prop}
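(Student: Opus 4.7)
The plan is to observe that the lengthy case analysis carried out above already contains the proposition, and the proof reduces to collecting the cases in which $\lambda>\gamma$ and reading off uniqueness from the explicit formulas.

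First I would eliminate Cases (I) and (II), together with the preliminary case $\psi\in\partial(\sigma^\vee)$. Each of these was shown to satisfy $\lambda=\gamma$, so the hypothesis $\lambda>\gamma$ forces us into Case (III) with $\psi\in\Int(\sigma^\vee)$. The first conclusion is then immediate: in Case (III) we produced a basis $\{e_1,e_2\}$ of $G$ (with $e_2$ generating $G\cap(v^1)^\perp$ and $\langle v^1,e_1\rangle=1$), so $G\simeq\Z^2$.

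For uniqueness I would go through the three subcases of Case (III) identified earlier. If $v^1\in\partial(\sigma^\vee)$, the formula $\lambda=\gamma+\gamma\psi'(1-\alpha)$ with $0\le\alpha<1$ forces $\psi'>0$ under $\lambda>\gamma$, and the analysis of that subcase established that $\lambda$ is then attained only by $e_1+e_2$. If $v^1\in\Int(\sigma^\vee)$ and we are in subcase (B), the formula $\lambda=\gamma(1+\psi'(1-\alpha))$ again forces $\psi'>0$, and uniqueness of the minimizer $e_1+e_2$ was recorded there.

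The only remaining subcase is (A), where $\alpha=0$, $\beta=1$, $\psi'=0$ and $\lambda=2\gamma>\gamma$ automatically. Here the elements of $G\cap\Int\sigma$ are $p_1e_1+p_2e_2$ with $p_1\ge 1$ and $0<p_2<p_1$; since $\psi=\gamma v^1$, one has $\langle \psi,p_1e_1+p_2e_2\rangle=\gamma p_1$, so the minimum value $2\gamma$ is reached only when $p_1=2$, which in turn forces $p_2=1$. The main (and only) obstacle is bookkeeping: one has to verify that subcase (A) is not accidentally overlooked, since it is the unique situation in which $\psi'=0$ is compatible with $\lambda>\gamma$, and to notice that in (A) the candidate $e_1+e_2$ lies on the boundary of $\sigma$ rather than its interior, so the unique minimizer is $2e_1+e_2$ instead.
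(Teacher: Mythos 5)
Your proof is correct and follows essentially the same route as the paper, which states this proposition as a summary of the preceding case analysis: Cases (I), (II) and $\psi\in\partial(\sigma^\vee)$ are excluded because there $\lambda=\gamma$, Case (III) gives the basis $\{e_1,e_2\}$ of $G$, and uniqueness of the minimizer is read off subcase by subcase. Your explicit handling of subcase (A) — where $\psi'=0$ yet $\lambda=2\gamma>\gamma$, and the unique minimizer is $2e_1+e_2$ rather than $e_1+e_2$ — is exactly the bookkeeping the paper leaves implicit, and it checks out.
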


 Suppose $0<\alpha\in \Q$. Let $q\alpha\in \Z$ with $q\ge 1$
minimal. Then 
$
\lambda=\gamma+(q-q\alpha)\lambda'.
$


\section{Applications}


\begin{thm}\label{2se}
	Let $V$ be a two dimensional $\R$-vectors space, let $\sigma\subset V$ be a closed, strongly convex $2$-dimensional cone, with dual cone $\sigma^\vee\subset V^*$. Let 
$\psi\in \sigma^\vee\setminus 0$.
Let $G\le (V,+)$ be a subgroup, with dual subgroup $G^*\le (V^*,+)$. Suppose $G\cap \Int \sigma \ne \emptyset$.
Let $t>0$. The following properties are equivalent:
\begin{itemize}
\item[(1)] $G\cap \{e\in \Int \sigma \vert \langle \psi,e\rangle<t\}=\emptyset$.
\item[(2)] One of the following holds:
\begin{itemize}
\item[a)] There exists $m\in G^*\cap \sigma^\vee\setminus 0$ such
that $\psi-tm\in \sigma^\vee$, or
\item[b)] $G=(m^1)^*\cap(m^2)^*$, where $m^1,m^2\in \sigma^\vee$ are
linearly independent, and $\psi=t_1m^1+t_2m^2$ for some
$t_1,t_2>0$ and $t_1+t_2\ge t$.
\end{itemize}
\end{itemize}
\end{thm}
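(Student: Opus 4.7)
The plan is to prove the two directions separately. The implication $(2)\Rightarrow(1)$ is a short duality-pairing computation, while $(1)\Rightarrow(2)$ is essentially a repackaging of the case analysis already carried out in Section~3.

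For $(2)\Rightarrow(1)$, fix an arbitrary $e\in G\cap\Int\sigma$. In case (2a), $m\in G^*\cap\sigma^\vee\setminus 0$ forces $\langle m,e\rangle$ to be a positive integer, so $\langle m,e\rangle\ge 1$; combined with $\psi-tm\in\sigma^\vee$, this gives $\langle\psi,e\rangle\ge t\langle m,e\rangle\ge t$. In case (2b), the same positivity applied to each $m^i$ gives $\langle m^i,e\rangle\ge 1$ for $i=1,2$, hence
\[
\langle\psi,e\rangle=t_1\langle m^1,e\rangle+t_2\langle m^2,e\rangle\ge t_1+t_2\ge t.
\]

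For the converse, set $\lambda=\inf\{\langle\psi,e\rangle\mid e\in G\cap\Int\sigma\}$; condition (1) is exactly $\lambda\ge t$. By Lemmas~\ref{ex} and~\ref{gmax}, $\gamma=\gamma(\sigma^\vee,\psi,G^*)$ is a positive maximum, attained by some $v^1\in G^*\cap\sigma^\vee\setminus 0$, and $\gamma\le\lambda$. If $\gamma\ge t$, then $v^1$ realizes~(2a): from $\psi-\gamma v^1\in\sigma^\vee$ and $(\gamma-t)v^1\in\sigma^\vee$ we conclude $\psi-tv^1\in\sigma^\vee$, so we take $m=v^1$.

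It remains to treat the regime $\gamma<t\le\lambda$, which in particular forces $\lambda>\gamma$. The case analysis of Section~3 shows $\lambda=\gamma$ in Cases (I) and (II), so we must be in Case~(III). In each of the three sub-sub-cases (subcase $v^1\in\partial\sigma^\vee$, and subcases (A), (B) of $v^1\in\Int\sigma^\vee$), Section~3 produces an explicit basis $e_1,e_2$ of $G$ together with its dual basis $m^1,m^2$ of $G^*$, both lying in $\sigma^\vee$, and yields the formula
\[
\psi=\gamma(1-\psi'\alpha)\,m^1+\gamma\psi'\,m^2,
\]
with coefficient sum equal to $\lambda$. Setting $t_1=\gamma(1-\psi'\alpha)$ and $t_2=\gamma\psi'$, the sum is $t_1+t_2=\lambda\ge t$, and since $\{m^1,m^2\}$ is a basis of $G^*$ we have $G=(m^1)^*\cap(m^2)^*$, giving (2b).

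The main obstacle is bookkeeping: one must verify in each sub-sub-case of Case~(III) that the basis $\{m^1,m^2\}$ from Section~3 indeed lies in $\sigma^\vee$ and that $t_1,t_2>0$. Positivity of $t_2=\gamma\psi'$ uses $\psi'>0$, which holds in subcase $v^1\in\partial\sigma^\vee$ (where $\psi\ne\gamma v^1$) and in subcase (B) when $\lambda>\gamma$; in the exceptional subcase (A) we fall back on the direct formula $\psi=\gamma m^1+\gamma m^2$ with $\lambda=2\gamma$. Positivity of $t_1=\gamma(1-\psi'\alpha)$ follows from the normalization $0\le\alpha<1$ together with $\psi'\le 1$, which makes $\psi'\alpha<1$ automatic.
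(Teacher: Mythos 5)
Your proposal is correct and follows essentially the same route as the paper: the same duality-pairing computation for $(2)\Rightarrow(1)$, and for $(1)\Rightarrow(2)$ the same reduction to Case~(III) of Section~3 when $\gamma<t\le\lambda$, with $t_1=\gamma(1-\psi'\alpha)$ and $t_2=\gamma\psi'$ agreeing with the paper's $t_1=\lambda-\frac{\lambda-\gamma}{1-\alpha}$, $t_2=\frac{\lambda-\gamma}{1-\alpha}$. Your extra bookkeeping (checking $m^1,m^2\in\sigma^\vee$, the positivity of $t_1,t_2$, and the exceptional subcase (A)) only makes explicit what the paper delegates to the Section~3 analysis.
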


\begin{proof}
$(2)\Longrightarrow(1)$: Let $e\in G\cap \Int \sigma $.
In case a), $\langle m,e\rangle=k$ is a positive integer. Then
$$
\langle \psi,e\rangle =\langle \psi-tm,e\rangle+
t\langle m,e\rangle\ge tk\ge t. 
$$
In case b), $\langle m^i,e\rangle=k_i$ are positive integers.
Then 
$$
\langle \psi,e\rangle =t_1\langle m^1,e\rangle+t_2\langle m^2,e\rangle
=t_1k_1+t_2k_2\ge t_1+t_2\ge t. 
$$

$(1)\Longrightarrow(2)$: Let
$\lambda=\inf\{\langle \psi,e\rangle \vert e\in G\cap \Int \sigma \}$.
By assumption, $\lambda\ge t>0$. We use the notations and results
of the previous section. Let $\gamma=\gamma(v^1)$. If 
$\gamma\ge t$, a) holds with $m=v^1$. Suppose $\gamma<t$.
In particular, $\gamma<\lambda$. Then we are in case (III),
and we can take 
$t_1=\lambda-\frac{\lambda-\gamma}{1-\alpha}$,
$t_2=\frac{\lambda-\gamma}{1-\alpha}$.
Note that $t_1,t_2>0$ since $\gamma<\lambda$.
\end{proof}

\begin{rem}
The assumption $G\cap \Int \sigma \ne \emptyset$ is redundant if
$\psi\in \Int \sigma^\vee$. Indeed, by Lemma~\ref{co},
$G\cap \Int \sigma=\emptyset$ is equivalent to the existence
of $m_0\in \sigma^\vee\setminus 0$ with $\R m_0\subseteq G^*$.
If $\psi\in \Int \sigma^\vee$, then there exists $k\gg 0$
with $k\psi-m_0\in \sigma^\vee$. Then (2).a) holds for $m=\frac{m_0}{kt}$.
\end{rem}

The following is an effective version of a result of Lawrence~\cite{Law91}.

\begin{thm}\label{Lawr}
Let $\Z^2\le G\le \R^2$ be a subgroup.
Let $p,q$ be positive integers and denote
$U=\{(x,y)\in \R^2 \vert x,y>0,x+y<\frac{p}{q}\}$. 
The following properties are equivalent:
\begin{itemize}
\item[(1)] $G\cap U=\emptyset$.
\item[(2)] One of the following holds:
\begin{itemize}
\item[a)] $G\subseteq m^*$ for some 
$m\in \check{\N}^2\cap [0,\frac{q}{p}]^2\setminus 0$, or
\item[b)] $G=(m^1)^*\cap(m^2)^*$, where $m^1,m^2\in \check{\N}^2$ form a 
basis of $\check{\R}^2$, and there are integers $k_1,k_2\ge 1$
with $k_1+k_2\le 2q$ and $\frac{k_1m^1+k_2m^2}{k_1+k_2}\in [0,\frac{q}{p}]^2$.
\end{itemize}
\end{itemize}
\end{thm}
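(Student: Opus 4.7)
The plan is to apply Theorem~\ref{2se} with $V = \R^2$, $\sigma = \R_{\ge 0}^2$ (so $\sigma^\vee = \R_{\ge 0}^2$), $\psi = (1,1) \in \Int \sigma^\vee$, and $t = p/q$. Under these identifications $U = \{e \in \Int \sigma : \langle \psi, e\rangle < t\}$, so $G \cap U = \emptyset$ is exactly condition (1) of Theorem~\ref{2se}. The hypothesis $\Z^2 \le G$ supplies both $(1,1) \in G \cap \Int \sigma$ and, by duality, $G^* \le (\Z^2)^* = \Z^2$, which will let me upgrade any element of $G^* \cap \sigma^\vee$ to one of $\check{\N}^2$.

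First I would unpack case (2.a) of Theorem~\ref{2se}: an element $m \in G^* \cap \sigma^\vee \setminus 0$ with $\psi - tm \in \sigma^\vee$ automatically lies in $\check{\N}^2$ (since $G^* \le \Z^2$), satisfies $m_j \le q/p$ for each $j$ (unpacking $\psi - tm \ge 0$ coordinatewise), and $m \in G^*$ is tautologically equivalent to $G \le m^*$. This yields case (2.a) of the present theorem.

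For case (2.b) of Theorem~\ref{2se}, the linearly independent pair $m^1, m^2 \in \sigma^\vee$ with $G = (m^1)^* \cap (m^2)^*$ produces a dual lattice $G^* = \Z m^1 + \Z m^2 \le \Z^2$; hence $m^1, m^2 \in \check{\N}^2$, and since they generate the rank-two lattice $G^*$ they also form a basis of $\check{\R}^2$. The relation $\psi = t_1 m^1 + t_2 m^2$ with $t_i > 0$ and $t_1 + t_2 \ge p/q$, together with the integrality of $\psi$ and the $m^i$, forces $(t_1, t_2) \in \Q_{>0}^2$ by Cramer's rule. Writing $(t_1, t_2) = (a_1, a_2)/D$ with $a_1, a_2, D$ positive integers, the identity $a_1 m^1 + a_2 m^2 = D(1,1)$ shows that the natural candidate $(k_1, k_2) := (a_1, a_2)$ satisfies $\frac{k_1 m^1 + k_2 m^2}{k_1 + k_2} = (1,1)/\lambda \in [0, q/p]^2$, where $\lambda := t_1 + t_2 \ge p/q$.

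The main obstacle, and the reason the statement is called \emph{effective}, is the final inequality $k_1 + k_2 \le 2q$: Theorem~\ref{2se} provides only real coefficients. To extract the integer bound I would combine the inequality $\gamma \le \lambda \le 2\gamma$ from the end of Section 3 with the easy bound $\lambda \le \langle \psi, (1,1)\rangle = 2$ (immediate from $(1,1) \in G \cap \Int \sigma$), and then either exploit the explicit description of case (III.B) of Section 3---where $t_1, t_2$ are rationally parametrized through $\alpha \in [0, 1) \cap \Q$ and its denominator---or approximate $(t_1, t_2)$ by a rational of small denominator inside the rational polyhedral cone of admissible pairs $(k_1, k_2)$ satisfying $\frac{k_1 m^1 + k_2 m^2}{k_1 + k_2} \in [0, q/p]^2$. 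This is where the technical work lies; everything else above is bookkeeping from Theorem~\ref{2se}.
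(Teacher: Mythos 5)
Your reduction to Theorem~\ref{2se} with $\sigma=\R^2_{\ge 0}$, $\psi=(1,1)$, $t=p/q$, and the observation that $\Z^2\le G$ forces $G^*\le\Z^2$ (hence $m,m^1,m^2$ integral, and $m\in G^*$ equivalent to $G\le m^*$) is exactly how the paper begins, and your handling of case a) and of the ``shape'' of case b) is correct bookkeeping. But the one feature that makes this an \emph{effective} version of Lawrence's result --- the bound $k_1+k_2\le 2q$ --- is precisely the step you leave open, and neither of your suggested routes is carried far enough to constitute a proof. The denominator $D$ of $(t_1,t_2)$ is a priori unbounded, so your ``natural candidate'' $(k_1,k_2)=(a_1,a_2)$ gives $k_1+k_2=D\lambda$ with no control; and the alternative of locating a small integer point in the cone of admissible $(k_1,k_2)$ is not obviously feasible, since that cone can degenerate to the single ray through $(t_1,t_2)$, which returns you to bounding $D$.

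The missing arithmetic, which is the actual content of the paper's proof, is the following. In case (III) of Section~3 the vector $e_1+\alpha e_2$ spans a ray of $\sigma=\R^2_{\ge0}$, say $\R_+(1,0)$; since $(1,0)\in\Z^2\le G$ and $v^1\in G^*$, the scalar $l:=\langle v^1,(1,0)\rangle$ is a positive integer with $(1,0)=l(e_1+\alpha e_2)$, whence $\gamma=1/l$ and $l\alpha\in\Z$. Feeding $t=p/q\le\lambda$ into Lemma~\ref{dif} and clearing denominators by the integer $(l-l\alpha)q$ produces the specific integers $k_1=q-pl\alpha$ and $k_2=lp-q$ (positive because $\gamma<p/q$ and because the $m^1$-coefficient in Lemma~\ref{dif} stays positive for $t\le\lambda$), and then $k_1+k_2=p(l-l\alpha)\le pl=p/\gamma\le 2p/\lambda\le 2q$, the last two inequalities using $\lambda\le 2\gamma$ and $\lambda\ge p/q$; your bound $\lambda\le 2$ plays no role. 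Until you establish the integrality statement $\gamma=1/l$, $l\alpha\in\Z$ with $l\le 2q/p$, the proposal has a genuine gap at its central claim.
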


\begin{proof}
The implication $(2)\Longrightarrow (1)$ is clear.
Consider the converse. Here $\sigma=\R_+(1,0)+\R_+(0,1)$, 
$\psi=(1,1)$ and $(0,1),(1,0)\in G$.
We have $\lambda\ge \frac{p}{q}$.
If $\gamma\ge \frac{p}{q}$, we are in case a). Suppose 
$\gamma<\frac{p}{q}$. In particular, $\gamma<\lambda$.
Therefore we are in case (III).
The vector $e_1+\alpha e_2$ belongs
to a ray of $\sigma$, say $\R_+(1,0)$. 
It follows that 
$$
(1,0)=\langle v^1,(1,0)\rangle \cdot (e_1+\alpha e_2)
$$
and 
$$
\gamma=\frac{\langle \psi,(1,0)\rangle}{\langle v^1,(1,0)\rangle}=\frac{1}{\langle v^1,(1,0)\rangle}
$$
Then $\gamma=\frac{1}{l}$ for an integer $l$, and $l\alpha\in \Z$. 
The assumption $\gamma<\frac{p}{q}$ becomes $lp>q$. By Lemma~\ref{dif},
we have 
$$
\psi\in (\frac{p}{q}-\frac{\frac{p}{q}-\gamma}{1-\alpha})m^1+
\frac{\frac{p}{q}-\gamma}{1-\alpha} m^2+\sigma^\vee.
$$
We obtain
$$
(l-l\alpha)q\psi\in (q-pl\alpha)m^1+(lp-q)m^2+\sigma^\vee.
$$
Take $k_1=q-pl\alpha,k_2=lp-q$. Then $k_1,k_2>0$ and 
$k_1+k_2=p(l-l\alpha)$. Now $l-l\alpha\le l=\frac{1}{\gamma}
\le \frac{2}{\lambda}\le \frac{2q}{p}$. Therefore
$k_1+k_2\le 2q$. 
\end{proof}

\begin{rem}
If $\frac{p}{q}=\frac{1}{k}$ and $k>1$, 
we may take $k_1+k_2<2k$ in b). Indeed,
suppose $l-l\alpha=2k$. Then 
$\alpha=0,\gamma=\frac{\lambda}{2},\lambda=\frac{1}{k}$, and
$\psi=\frac{1}{k}[\frac{1}{2}m^1+\frac{1}{2}m^2]$. So we
may take $k_1=k_2=1$ in the case.
\end{rem}

\begin{exmp} If $\frac{p}{q}=1$, we obtain that
\begin{itemize}
\item[a)] either $G$ is contained in one of the subgroups 
$(1,0)^*,(0,1)^*,(1,1)^*$, 
\item[b)] $G$ equals $(0,1)^*\cap (1,0)^*=\Z^2$. 
\end{itemize}
If $\frac{p}{q}=\frac{1}{2}$, up to interchanging the coordinates, we obtain 
\begin{itemize}
\item[a)] either $G$ is contained in one of the subgroups 
$(0,2)^*$, $(1,2)^*$, $(2,2)^*$, 
\item[b)] or $G$ equals one of the following subgroups:
$(0,3)^*\cap (3,0)^*, (0,3)^*\cap (3,1)^*, 
(0,3)^*\cap (4,0)^*, (0,3)^*\cap (4,1)^*, (0,3)^*\cap (5,0)^*,
(0,4)^*\cap (4,0)^*, (1,3)^*\cap(3,1)^*,(1,3)^*\cap(4,0)^*.
$
\end{itemize}
\end{exmp}


\section{Two dimensional toric log germs}


Let $X$ be an affine toric surface with invariant point $P$.
Let $\Sigma=X\setminus T$ be the complement of the torus. Then
$\Sigma=E_1+E_2$ is the sum of two invariant prime divisors $E_1,E_2$. 
Since $\dim X=2$, both $E_i$ are $\Q$-Cartier. 
We have 
$$
K+\Sigma= 0,\mld_P(X,\Sigma)=0.
$$
Up to isomorphism, $X=T_N\emb(\sigma)$, where $\sigma=\R^2_{\ge 0}$ is the standard positive cone in $\R^2$, and $N\subset \R^2$ is a $2$-dimensional lattice which contains $e_1=(1,0),e_2=(0,1)$ as primitive vectors.

Let $P\in (X,B)$ be an invariant  structure of log variety with 
$\mld_P(X,B)=a\ge 0$. We have $B=b_1E_1+b_2E_2$, with $b_1,b_2\in [0,1]$.
Let $\psi\in N^*=:M$ with $\langle \psi,e_i\rangle=1-b_i$ for $i=1,2$,
so that $(\chi^\psi)+K+B=0$.
For any toric valuation $E_e$ of $X$ (here $e\in N^{prim}\cap \sigma$),
the log discrepancy of $(X,B)$ in $E_e$ is computed as follows:
$$
a_{E_e}(X,B)=\langle \psi,e\rangle.
$$
In particular, $a=\min\{\langle \psi,e\rangle \vert e\in N\cap \Int\sigma\}$.
We have $\mld_P(X,B)\ge t$ if and only if $N\cap \{e\in \Int \sigma \vert \langle \psi,e\rangle<t\}=\emptyset$.

{\bf 1)} $a\in [0,2]$. Note that $a=0$ if and only if $B=\Sigma$, 
and $a=2$ if and only if $X=\bA^2$ and $B=0$. By Theorem~\ref{2se},
we deduce that $a\ge 1$ if and only if $(X,B)$ is one of the following:
\begin{itemize}
\item $(\bA^1,b_1\cdot 0)\times (\bA^1,b_2\cdot 0)$,
$b_1+b_2\le 1$ (here $a=2-b_1-b_2$), or
\item $B=0$ and $X$ is a quotient of $\bA^2$ of type $\frac{1}{l+1}(l,1) \ (l\ge 1)$. Here $a=1$.
\end{itemize}

{\bf 2)} Denote by $|\fm_{X,P}|_\Sigma$ the invariant hyperplane sections through $P$, or equivalently, the hyperplane sections through $P$ which are supported by $\Sigma$. One of the following holds:
\begin{itemize}
\item[a)] There exists $H\in |\fm_{X,P}|_\Sigma$, $\mld_P(X,B+aH)=0$, or
\item[b)] There exist $H_1,H_2\in |\fm_{X,P}|_\Sigma$ and $\gamma_1,\gamma_2>0$ such that $\gamma_1+\gamma_2=a$ and $\mld_P(X,B+\gamma_1H_1+\gamma_2H_2)=0$.
\end{itemize}

\begin{proof}
Let $\gamma=\max\{\lct_P(X,B;H) \vert H\in |\fm_{X,P}|_\Sigma\}$. Choose
$H_1\in |\fm_{X,P}|_\Sigma$ with $(X,B+\gamma H_1)$ maximally log canonical.
We have $a\ge \gamma$. If $a=\gamma$, a) holds with $H=H_1$.

Suppose $a>\gamma$. Note that $(\psi)=\Sigma-B$. 
We constructed $\alpha\in [0,1)$ and 
$H_2\in |\fm_{X,P}|_\Sigma$  
with $B+\gamma_1H_1+\gamma_2H_2=\Sigma$, where 
$$
\gamma_2=\frac{a-\gamma}{1-\alpha}, \gamma_1=\lambda-\gamma_2=
\gamma-\alpha \gamma_2\le \gamma.
$$
We have a chain which strictly decreases the minimal log canonical
center near $P$
$$
(X,B)\prec (X,B+\gamma H_1)\prec (X,B+\gamma_1 H_1+\gamma_2H_2)=(X,\Sigma).
$$
Here $B+\gamma_1 H_1+\gamma_2H_2=(B+\gamma H_1)+\gamma_2(H_2-\alpha H_1)$.
Note that $H_2-\alpha H_1$ is effective $\Q$-Cartier. Its index coincides
with the index of $\alpha$. If $\mult_{E_1}(B+\gamma H_1)=0$ and 
$l=\mult_{E_1}(H_1)$, then $\ind(\alpha)=l$ and $\gamma=\frac{1-b_1}{l}$.
In particular, $l\gamma\le 1$. We showed that 
$$
\frac{a}{2}\le \gamma\le a,
$$
so $l\le \frac{2}{a}$. Therefore $l$ is bounded by $a$.
Also, the inequalities $\psi'\le 1$ and $\alpha<1$ become
$$
a-\gamma\le \gamma_2\le \gamma.
$$
Let $a_1=\mld_P(X,B+\gamma H_1)$. We showed that
\begin{itemize}
\item $a=2\gamma$ if $\alpha=0$,
\item $a=\gamma+(l-l\alpha)a_1$ if $\alpha\ne 0$.
\end{itemize}

Suppose $b_1,b_2$ belong to a given DCC set $\cB\subset [0,1]$.
Then $a$ satisfies ACC. In fact, note that $\gamma<a$ and $b_1\in\cB$ implies
$a-\gamma\ge \epsilon(a,l,\cB)>0$. One shows that if $a-\gamma$ is bounded away from $0$, 
then $X$ belongs to finitely many isomorphism types. So
$a$ can only accumulate to $\gamma$, from above.
\end{proof}

\begin{cor}
There exists $H\in |\fm_{X,P}|_\Sigma$ with $\mld_P(X,B+\frac{a}{2}H)\ge 0$.
\end{cor}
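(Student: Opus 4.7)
The plan is to deduce the corollary directly from the dichotomy (a)/(b) established in item \textbf{2)} just before the corollary, by selecting one hyperplane section from each case and invoking monotonicity of log discrepancies under increasing the boundary.

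First I would observe that log discrepancies are monotone: if $D_1\le D_2$ are effective $\Q$-Cartier divisors, then $\mld_P(X,B+D_1)\ge \mld_P(X,B+D_2)$. In case (a), we are handed $H\in |\fm_{X,P}|_\Sigma$ with $\mld_P(X,B+aH)=0$, and since $\tfrac{a}{2}H\le aH$, monotonicity gives
\[
\mld_P(X,B+\tfrac{a}{2}H)\ge \mld_P(X,B+aH)=0,
\]
so the same $H$ works.

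In case (b), there are $H_1,H_2\in|\fm_{X,P}|_\Sigma$ and $\gamma_1,\gamma_2>0$ with $\gamma_1+\gamma_2=a$ and $\mld_P(X,B+\gamma_1H_1+\gamma_2H_2)=0$. After relabeling, assume $\gamma_1\ge \gamma_2$, so $\gamma_1\ge \tfrac{a}{2}$. Taking $H=H_1$, we have $\tfrac{a}{2}H_1\le \gamma_1H_1\le \gamma_1H_1+\gamma_2H_2$, and two applications of monotonicity yield
\[
\mld_P(X,B+\tfrac{a}{2}H_1)\ge \mld_P(X,B+\gamma_1H_1+\gamma_2H_2)=0.
\]

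There is no real obstacle here; the corollary is essentially a packaging of the preceding theorem, with the only subtlety being that case (b) produces two hyperplane sections while the corollary asks for only one. This is handled by choosing the section with the larger coefficient, since $\min(\gamma_1,\gamma_2)\le \tfrac{a}{2}\le \max(\gamma_1,\gamma_2)$. Writing the proof out explicitly should take only a few lines.
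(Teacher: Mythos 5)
Your proposal is correct and follows essentially the same route as the paper: in case a) monotonicity with $\frac{a}{2}H\le aH$, and in case b) picking the hyperplane section with the larger coefficient, which is at least $\frac{a}{2}$. The paper states only the case b) step, leaving the rest implicit, so your write-up is just a more explicit version of the same argument.
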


\begin{proof}
Suppose in b) that $\gamma_1\ge \gamma_2$. Then $\gamma_1\ge \frac{a}{2}$,
so we may take $H=H_1$.
\end{proof}

{\bf 3)} Suppose $B$ has standard coefficients and 
$\mld_P(X,B)\ge \frac{p}{q}$. Then there exists $1\le s\le \frac{2q}{p}$
and $B\le B_{qs}\le \Sigma$ such that $qs(K+B_{qs})\sim 0$ and 
$\mld_P(X,B_{qs})\ge \frac{p}{q}$.

\begin{proof} Case a): there exists $0\ne m\in M\cap \sigma^\vee$
with $\psi-\frac{p}{q}m\in \sigma^\vee$. Set 
$B_q=\sum_{i=1}^2(1-\frac{pm_i}{q})E_i$. Then $B\le B_q\le \Sigma$
and $(\chi^{pm})+q(K+B_q)=0$. Moreover, $\psi_{K+B_q}=\frac{p}{q}m$, so
$$
\langle \frac{p}{q}m,e\rangle\ge \frac{p}{q},\forall e\in N\cap \Int\sigma.
$$
Case b): $\psi=(a-\frac{a-\gamma}{1-\alpha})m^1+\frac{a-\gamma}{1-\alpha}m^2$.
In particular, 
$$
\psi\ge (\frac{p}{q}-\frac{\frac{p}{q}-\gamma}{1-\alpha})m^1+
\frac{\frac{p}{q}-\gamma}{1-\alpha}m^2.
$$
Equivalently,
$$
q\psi\ge (p-\frac{p-q\gamma}{1-\alpha})m^1+\frac{p-q\gamma}{1-\alpha}m^2.
$$
Since $B$ has standard coefficients, $\gamma=\frac{1}{l}$ for an
integer $l$ with $l\alpha\in \Z$. The above inequality becomes 
$$
(l-l\alpha) q\psi\ge (q-pl\alpha)m^1+(lp-q)m^2.
$$
Let $s=l-l\alpha$. Then $sq\psi\ge z_1m^1+z_2m^2$ for some
$z_1,z_2\in \Z_{>0}$. Set $B_{sq}=\sum_{i=1}^2(1-\frac{z_1m^1_i+z_2m^2_i}{sq})E_i$.
Then $B\le B_{sq}\le \Sigma$ and $(\chi^{z_1m^1+z_2m^2})+sq(K+B_{sq})=0$.
Thus 
$$
\langle \psi_{K+B_{sq}},e\rangle=
\langle \frac{z_1m^1+z_2m^2}{sq},e\rangle\ge \frac{z_1+z_2}{sq}=\frac{p}{q}.
$$
Note $s=l-l\alpha\le l=\gamma^{-1}\le \frac{2}{a}$.
\end{proof}

\begin{cor}
Suppose $B$ has standard coefficients and $\mld_P(X,B)=a>0$. 
Let $q$ be a positive integer with $qa\in \Z$. Then $sq(K+B)\sim 0$
for some integer $1\le s\le \frac{2}{a}$.
\end{cor}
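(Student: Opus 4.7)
The plan is to apply paragraph \textbf{3)} of Section~5 with $p:=qa$, and then show that the boundary $B_{sq}$ it produces must in fact equal $B$, so that the stated linear equivalence holds for $K+B$ itself. Since $qa\in\Z_{\ge 1}$, we may set $p:=qa$, so $p/q=a$; \textbf{3)} then delivers an integer $1\le s\le 2q/p=2/a$ together with an invariant divisor $B\le B_{sq}\le \Sigma$ satisfying $sq(K+B_{sq})\sim 0$ and $\mld_P(X,B_{sq})\ge a$. If I can upgrade the inclusion $B\le B_{sq}$ to equality, the corollary follows immediately.

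To establish $B=B_{sq}$, I would work in the toric dictionary of Section~5: let $\psi,\psi'\in M$ be the vectors with $\langle\psi,e_i\rangle=1-b_i$ and $\langle\psi',e_i\rangle=1-(B_{sq})_i$. Since $B\le B_{sq}$, we have $\psi\ge\psi'$ coordinatewise in the basis dual to $e_1,e_2$. Adding to the boundary can only decrease log discrepancies, so $\mld_P(X,B_{sq})\le\mld_P(X,B)=a$, and combined with $\mld_P(X,B_{sq})\ge a$ from \textbf{3)} this forces
$$
\mld_P(X,B_{sq})=\min_{e\in N\cap\Int\sigma}\langle\psi',e\rangle=a.
$$
Now pick a minimiser $e^*\in N\cap\Int\sigma$ with $\langle\psi,e^*\rangle=a$; such an $e^*$ exists by the formula $a=\min\{\langle\psi,e\rangle\mid e\in N\cap\Int\sigma\}$ recorded at the start of Section~5. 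Because $e^*$ has strictly positive coordinates and $\psi-\psi'\ge 0$, the pairing $\langle\psi-\psi',e^*\rangle$ is a sum of non-negative numbers times positive ones, and so vanishes iff $\psi=\psi'$. If $\psi\ne\psi'$, then $\langle\psi',e^*\rangle<a$, contradicting the displayed equality. Hence $\psi=\psi'$, i.e.\ $B=B_{sq}$, and $sq(K+B)=sq(K+B_{sq})\sim 0$ with $s\le 2/a$.

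The only substantive input is paragraph \textbf{3)}; the passage to the corollary is the short rigidity step above. The one delicate point is that the minimal log discrepancy must be realised by an interior lattice point $e^*$, whose strictly positive coordinates detect any coefficientwise gap between $B$ and $B_{sq}$, ruling out $B\lneq B_{sq}$ when both boundaries achieve the same mld. I do not anticipate any further obstacle.
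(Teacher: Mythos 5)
Your proposal is correct and follows the route the paper intends: the corollary is stated without proof as an immediate consequence of paragraph \textbf{3)} applied with $p=qa$, and the only missing ingredient is the identification $B_{sq}=B$. Your rigidity step (equality of the two minimal log discrepancies, tested against an interior lattice point $e^*$ realizing the minimum, which exists since $N\supseteq\Z^2$ with finite index makes the relevant values discrete) supplies exactly that identification and is sound; it is also consistent with what the paper's proof of \textbf{3)} actually produces when $\mld_P(X,B)=p/q$ holds with equality, namely $sq\psi=z_1m^1+z_2m^2$ exactly rather than merely $\ge$.
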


{\bf 4)} The following hold:
\begin{itemize}
\item There exists $1\le n\le \lceil \frac{2}{a}\rceil$,
$B\le B_n\le \Sigma$ with $n(K+B_n)\sim 0$, $\mld_P(X,B_n)>0$.
\item There exists $1\le n\le \frac{2}{a}$,
$\lfloor B\rfloor+\frac{\lfloor (n+1)\{B\}\rfloor}{n}\le 
B_n\le \Sigma$ with $n(K+B_n)\sim 0$, $\mld_P(X,B_n)>0$.
\end{itemize}

\begin{proof}
Use $H_1$ and $\gamma\le \frac{2}{a}$.
\end{proof}


\end{document}